\documentclass[twoside]{amsart}
\usepackage{mathrsfs}
\usepackage{amsfonts}
\usepackage{amssymb}
\usepackage{amssymb}
\usepackage{amssymb}
\usepackage{amssymb}
\usepackage{amssymb}
\usepackage{amssymb}
\usepackage{amssymb}
\usepackage{amssymb}
\usepackage{amsmath}
\usepackage[all]{xy}

\usepackage{lastpage}

\RequirePackage{amsmath} \RequirePackage{amssymb}
\usepackage{amscd,latexsym,amsthm,amsfonts,amssymb,amsmath,amsxtra}
\usepackage[colorlinks=true, urlcolor=blue,bookmarks=true,bookmarksopen=true,
citecolor=blue]{hyperref}

    \newcommand{\BA}{{\mathbb {A}}} 
    \newcommand{\BC}{{\mathbb {C}}} 
     \newcommand{\BF}{{\mathbb {F}}}


     
    \newcommand{\CC}{{\mathcal {C}}}

    \newcommand{\CM}{{\mathcal {M}}}

    \newcommand{\CW}{{\mathcal {W}}}


     \newcommand{\fo}{{\mathfrak{o}}}  \newcommand{\fp}{{\mathfrak{p}}}


    \newcommand{\RU}{{\mathrm {U}}}

     \newcommand{\bx}{{\bf {x}}} 
     
      \newcommand{\bW}{{\bf {W}}}

     \newcommand{\Nm}{{\mathrm {Nm}}}
    \newcommand{\lenth}{{\mathrm {\lenth}}}

    \newcommand{\Gal}{{\mathrm{Gal}}} \newcommand{\GL}{{\mathrm{GL}}}
    \newcommand{\Hom}{{\mathrm{Hom}}} 
    \newcommand{\Ind}{{\mathrm{Ind}}}

\newcommand{\bt}{{\mathbf{t}}}

    \newcommand{\cond}{\mathrm{cond}} 
    \renewcommand{\Re}{{\mathrm{Re}}} 
    \newcommand{\Res}{{\mathrm{Res}}}

    \newcommand{\Sp}{{\mathrm{Sp}}}
    \newcommand{\diag}{{\mathrm{diag}}} \newcommand{\Mat}{{\mathrm{Mat}}}

 \newcommand{\SO}{{\mathrm{SO}}}
 
 \newcommand{\tr}{{\mathrm{tr}}}
  
\newcommand{\vol}{{\mathrm{vol}}}

    \newcommand{\wt}{\widetilde}

    \newcommand{\wpair}[1]{\left\{{#1}\right\}}

    \newcommand{\ov}{\overline}
    
    \newcommand{\incl}{\hookrightarrow}
    
     \newcommand{\ra}{\rightarrow}

    \theoremstyle{plain}

    \newtheorem{thm}{Theorem}[section] \newtheorem{cor}[thm]{Corollary}
    \newtheorem{lem}[thm]{Lemma}  \newtheorem{prop}[thm]{Proposition}


    \numberwithin{equation}{section}

 \newtheorem*{theorem*}{Local Converse Theorem for $\RU_{2r+1}$}
 \newtheorem*{thm*}{Stability of the local gamma factors for $\RU_{2r+1}$}
 \newtheorem*{thmn}{A ``new" local converse theorem for $\GL_{2r+1}$}

\usepackage[top=1in, bottom=1in, left=1.25in, right=1.25in]{geometry}

\title{A local converse theorem for $\RU_{2r+1}$}
\author{Qing Zhang}
\address{School of Mathematics, Sun Yat-Sen University, Guangzhou, China, 510275}
\email{qingzhang0@gmail.com}

\subjclass[2010]{11F70, 22E50}
\keywords{local gamma factors, unitary group, local converse theorem}
\begin{document}

\maketitle


\begin{abstract}
Let $E/F$ be a quadratic extension of $p$-adic fields and $\RU_{2r+1}$ be the unitary group associated with $E/F$. We prove the following local converse theorem for $\RU_{2r+1}$: given two irreducible generic supercuspidal representations $\pi,\pi_0$ of $\RU_{2r+1}$ with the same central character, if $\gamma(s,\pi\times \tau,\psi)=\gamma(s,\pi_0\times \tau,\psi)$ for all irreducible generic representation $\tau$ of $\GL_n(E)$ and for all $n$ with $1\le n\le r$, then $\pi\cong \pi_0$. The proof depends on analysis of the local integrals which define local gamma factors and uses certain properties of partial Bessel functions developed by Cogdell-Shahidi-Tsai recently.
\end{abstract}

\section*{Introduction}
Let $E/F$ be a quadratic extension of $p$-adic local fields and $\RU_{2r+1}$ be a quasi-split unitary group of $2r+1$ variables, corresponding to $E/F$. In this paper, we prove the following 
\begin{theorem*} Let $\pi,\pi_0$ be two irreducible generic supercuspidal representations of $\RU_{2r+1}$ with the same central character. If $\gamma(s,\pi\times \tau,\psi)=\gamma(s,\pi_0\times \tau,\psi)$ for all irreducible generic representation $\tau$ of $\GL_n(E)$ and for all $n$ with $1\le n\le r$, then $\pi\cong \pi_0$.
\end{theorem*}
When $r=1$, the above theorem was proved in \cite{Ba97}.

In the above theorem, the local gamma factors are those defined from local functional equations of the local zeta integrals considered in \cite{BAS}. To the author's understanding, it is still not known whether these local gamma factors are the same as those defined using Langlands-Shahidi method.

In \cite{Jng}, D. Jiang proposed local converse conjectures for classical groups. Following the recent proof  of Jacquet's local converse conjecture for $\GL$  \cite{Ch, JLiu}, the local converse conjectures for many classical groups could be proved by considering the descent map from $\GL$ to the classical groups, for examples, see \cite{JngS03} for the $\SO_{2n+1}$ case and \cite{ST, M} for the $\RU(n,n)$ case. The $\Sp_{2n}$ case is announced in \cite{JngS12}, although the proof is not published. To the author's knowledge, the irreducibility of the descent from $\GL$ to $\RU_{2r+1}$ is not considered in the literature and thus our result is new. 

In \cite{Zh}, we proved local converse theorems for $\Sp_{2r}$ and $\RU(r,r)$ based on pure local analysis of the integrals which define the local gamma factors. In particular, the proof of \cite{Zh} is independent of the recently proved local converse theorem for $\GL$ \cite{Ch, JLiu}. The current paper is a sequel of \cite{Zh}. Here we use similar methods as in \cite{Zh} to give a proof of the above local converse theorem for $\RU_{2r+1}$.\\

Our method of proof can give a ``new" local converse theorem for $\GL_{2r+1}$ as discussed below.

Temporarily, let $E/F$ be a quadratic extension of global fields, and $\RU_{2r+1}(E/F)$ be the unitary group associated with $E/F$. Let $\BA_F$ (resp. $\BA_E$) the ring of adeles of $F$ (resp. $E$). Let $\pi$ be an irreducible generic cuspidal automorphic representation of $\RU_{2r+1}(\BA_F)$ and $\tau$ be an irreducible generic cuspidal automorphic representation of $G_n(\BA_F)$, where $G_n=\Res_{E/F}(\GL_n)$. In \cite{BAS}, Ben-Artzi and Soudry constructed a global zeta integral which represents the $L$-function of $\pi\times \tau$ at unramified places. If $v$ is an inert place of $F$, then $E_v/F_v$ is a quadratic extension of local fields and the group $\RU_{2r+1}$ at such a place is a local unitary group associated with $E_v/F_v$. The local functional equation gives a local gamma factor $\gamma(s,\pi_v\times \tau_v,\psi_v)$, which is the local gamma factor we used in the above local converse theorem for $\RU_{2r+1}$, see $\S$1 for more details. 

 If $v$ is a place of $F$ which splits over $E$, then $E_v\cong F_v\oplus F_v$, $\RU_{2r+1}(E_v/F_v)\cong \GL_{2r+1}(F_v)$, and $G_n(F_v)=\GL_n(F_v)\oplus \GL_n(F_v)$. At a split place $v$, $\pi_v$ is a generic irreducible representation of $\GL_{2r+1}(F_v)$ and $\tau_v$ is a pair of irreducible generic representations $(\tau_{1,v},\tau_{2,v})$ of $\GL_n(F_v)\oplus \GL_n(F_v)$. From the local functional equation of the local zeta integrals at a split place $v$, one can obtain a local gamma factor $\gamma(s,\pi_v\times \tau_v,\psi_v)$. Our method of proof can give the following version of the local converse theorem for $\GL_{2r+1}$.

\begin{thmn}
Let $F$ be a $p$-adic field and $\pi,\pi_0$ be two irreducible supercuspidal generic representations of $\GL_{2r+1}(F)$ with the same central character. If $\gamma(s,\pi\times \tau,\psi)=\gamma(s,\pi_0\times \tau,\psi)$ for all pairs $\tau:=(\tau_1,\tau_2)$ of irreducible generic representations of $\GL_n(F)$ and for all $1\le n\le r$, then $\pi\cong \pi_0$.
\end{thmn}

 The above ``new" local converse theorem is expected to be not new. In fact, for a pair of irreducible generic representations $\tau=(\tau_1,\tau_2)$ of $\GL_n(F)$, it is expected that $\gamma(s,\pi\times \tau,\psi)=\gamma(s,\pi\times \tau_{1},\psi)\gamma(s,\tilde\pi\times\tau_{2},\psi)$ up to a normalizing factor which only depends on $\tau_1,\tau_2$, where $\gamma(s,\pi\times \tau_i,\psi)$ is the standard local gamma factor for $\GL_{2r+1}\times \GL_n$ for $i=1,2$. If the expected property holds, the above ``new" local converse theorem for $\GL_{2r+1}$ in fact gives a new proof of Jacquet's local converse conjecture for $\GL_{2r+1}$, which was proved in \cite{Ch} and \cite{JLiu} recently. A consideration in the $\RU(r,r)$ case would give a new proof of Jacquet's conjecture for $\GL_{2r}$.

 Since the similarities of these two situations, in our paper, we will only give details of the proof in the non-split case, i.e., $\RU_{2r+1}$ case, and leave the proof in the split case, i.e., the above ``new" local converse theorem for $\GL_{2r+1}$ to the reader.\\

As a byproduct of our proof, we can prove the stability of the local gamma factors for $\RU_{2r+1}$:
\begin{thm*}Let $\pi,\pi_0$ be two irreducible generic supercupidal representations of $\RU_{2r+1}$ with the same central character. Then there exists an integer $l=l(\pi,\pi_0)$ such that for any quasi-character $\chi$ of $E^\times$, with $\cond(\chi)>l$, we have
$$\gamma(s,\pi\times \chi,\psi)=\gamma(s,\pi_0\times \chi,\psi).$$
\end{thm*}
Stability of local gamma factors are proved in various settings. For example, in \cite{CPSS08}, Cogdell-Shahidi-Piatetski-Shapiro proved the stability of local gamma factors arising from Langlands Shahidi method for any quasi-split groups. Since it is still not known the local gamma factors arising from the local functional equation are the same as those local gamma factors arising from Langlands-Shahidi method, our case is not covered by \cite{CPSS08}.\\

This paper is organized as follows. In $\S$1, we review the definition of local gamma factors for $\RU_{2r+1}\times \Res_{E/F}(\GL_n)$ for $n\le r$. In $\S$2, we review some background materials, including Howe vectors and Cogdell-Shahidi-Tsai's theory on partial Bessel functions, and then give an outline of our proof of the local converse theorem for $\RU_{2r+1}$. After some preparations in $\S3$, the local converse theorem for $\RU_{2r+1}$ is proved in $\S$4. We then prove the stability of the local gamma factors in $\S$5.

\section*{Acknowledgment} 
I would like to thank Jim Cogdell for useful discussions, constant encouragement and support over the years. I am very grateful to every member in the automorphic representation seminar in The Ohio State University, where I learned Cogdell-Shahidi-Tsai's results on partial Bessel functions.I appreciated the anonymous referee for his/her careful reading of the first draft and many helpful suggestions.

\section*{Notation}
Most of our notations follow from \cite{BAS}.

Let $F$ be a $p$-adic field and $E/F$ be a quadratic extension. Denote by $\fo_E$ (resp. $\fo_F$) the ring of integers of $E$ (resp. $F$), by $\fp_E$ (resp. $\fp_F$) the maximal ideal of $\fo_E$ (resp. $\fo_F$). Let $\varpi_E$ (resp. $\varpi_F$) a fixed uniformizer of $E$ (resp. $F$). Let $q_E=|\fo_E/\fp_E|$ and $q_F=|\fo_F/\fp_F|$.

  Denote by $z\mapsto \bar z, z\in E$ the nontrivial Galois action in $\Gal(E/F)$. Then we can define the trace and norm by $\tr_{E/F}(z)=z+\bar z$ and $\Nm_{E/F}(z)=z\bar z$ for $z\in E$. Denote $E^1$ the norm 1 elements of $E^\times$, i.e., $E^1=\wpair{z\in E^\times: z\bar z=1}$. For a positive integer $k$, denote $G_k=\Res_{E/F}(\GL_k)$, where $\GL_k$ is viewed as an algebraic group over $E$. Thus $G_k$ is an algebraic group over $F$ and $G_k(F)=\GL_k(E)$.

For a positive integer $k$, denote $$J_k=\begin{pmatrix}&1\\ J_{k-1}& \end{pmatrix}, J_1=(1).$$
Let $$\RU_k=\wpair{g\in G_k: {}^t\! \bar g J_k g=J_k}.$$

For $k=2n$ even, we denote by $Q_{2n}=M_{2n}\ltimes V_{2n}$ the Siegel type parabolic subgroup of $\RU_{k}$, where 
$$M_{2n}=\wpair{m(a):=\begin{pmatrix}a &\\ &a^* \end{pmatrix}\in \RU_{2n},a\in G_n},\quad V_{2n}=\wpair{u(x):=\begin{pmatrix} I_n &x\\ &I_n \end{pmatrix}\in \RU_{2n}}.$$
Here $a^*$ is determined by $a$ such that $m(a)\in \RU_{2n}$. One can check that $a^*=J_n {}^t\!\bar a^{-1} J_n
$. Let $N_{2n}$ be the upper triangular unipotent subgroup of $\RU_{2n}$. Denote $w_n=\begin{pmatrix} &I_n\\ I_n& \end{pmatrix}\in \RU_{2n}$.

In this paper, we will mainly concern $\RU_k$ when $k$ is odd, i.e., $k=2r+1$ for a positive integer $r$. Let $B_k=A_kN_k$

be the upper triangular Borel subgroup with maximal torus $A_k$ and maximal unipotent $N_k$. The center of $\RU_{2r+1}$ is consisting elements of the form $\diag(z,z,\dots,z,z),z\in E^1$. We will identify $E^1$ with the center of $\RU_{2r+1}$ via the map $z\mapsto \diag(z,z,\dots,z,z)$. A typical element $t\in A_{2r+1}$ has the form
$$t=z\diag(a_1,\dots,a_r,1,\bar a_r^{-1},\dots,\bar a_1^{-1}),z\in E^1,a_1,\dots,a_r\in E^\times.$$

Define characters $\alpha_i,1\le i\le r$ on $A_{2r+1}$ by 
$$\alpha_i(t)=a_i/a_{i+1},1\le i\le r-1; \alpha_r(t)=\alpha_r,$$
where $t=z\diag(a_1,\dots,a_r,1,\bar a_r^{-1},\dots,\bar a_1^{-1}).$
Denote $\Delta_r=\wpair{\alpha_i,1\le i\le r}$, which is the set of simple roots of $\RU_{2r+1}$. For a root $\beta$, denote by $U_\beta$ the root space of $\beta$ and we fix an isomorphism $\bx_\beta: \BF\ra U_\beta$, where $\BF=E$ or $F$, which depends on the root $\beta$.

Throughout the paper, we will focus on the group $\RU_{2r+1}$ for a fixed positive integer $r$ and consider its various subgroups. If $r$ is understood, we will drop the subscript $r$ from the notation. For example, the letter $A$ will be used to denote the maximal diagonal torus of $\RU_{2r+1}$ and $N$ will be used to denote the upper triangular maximal unipotent subgroup of $\RU_{2r+1}$ if $r$ is understood. 

For $n\le r$, we consider the embedding $\RU_{2n}\ra \RU_{2r+1}$
$$\begin{pmatrix}a&b \\ c&d \end{pmatrix}\mapsto \begin{pmatrix}I_{r-n} &&&&\\ &a&&b&\\ &&1&&\\ &c&&d&\\ &&&&I_{r-n} \end{pmatrix}.$$
From this embedding, we will identify $\RU_{2n}$ as a subgroup of $\RU_{2r+1}$ without further notice. Thus the element $w_n\in \RU_{2n}$ can be viewed as an element of $\RU_{2r+1}$ for $n\le r$. 

For $ X\in G_r$, following \cite{BAS}, we denote 
$$X^{\wedge}=\begin{pmatrix}X&&\\&1&\\&&X^* \end{pmatrix}.$$
For $n\le r$, denote $w_{n,r-n}=\begin{pmatrix} &I_n\\ I_{r-n} & \end{pmatrix}^\wedge.$ Note that $w_{n,r-n}^{-1}=w_{r-n,n}$. Denote 
$$\tilde w_n=w_{n,r-n}w_n w_{n,r-n}^{-1}=\begin{pmatrix}&&I_n\\ &I_{2(r-n)+1}&\\ I_n&& \end{pmatrix}.$$

For $n\le r$, denote by $P_n$ the parabolic subgroup of $\RU_{2r+1}$ with Levi subgroup
 $$L_n=\wpair{E^1\cdot\begin{pmatrix}a&&&\\ &a_{n+1}&&\\ &&\dots& \\ &&&a_r \end{pmatrix}^\wedge, a\in G_n, a_{n+1},\dots,a_r\in G_1}.$$
Here $E^1$ is still identified with the center of $\RU_{2r+1}$.

\section{The local gamma factors for $\RU_{2r+1}$}

We fix a non-trivial additive character $\psi_0$ of $F$ and put $\psi=\psi_0\circ \tr_{E/F}$. Note that $\psi(\bar z)=\psi(z)$ for $z\in E$. 
\subsection{Induced representation of $\RU_{2n}$}
Following \cite{BAS}, we denote by $Z_n$ the upper triangular unipotent subgroup of $G_n(F)=\GL_n(E).$ We consider the character $\psi_{Z_n}$ of $Z_n$ by 
$$\psi_{Z_n}(z)=\psi(z_{1,2}+\dots+z_{n-1,n}),z=(z_{ij})\in Z_n.$$
Let $(\tau,V_\tau)$ be an irreducible $\psi_{Z_n}^{-1}$-generic representation of $\GL_n(E)$. We fix a nonzero Whittaker functional $\lambda\in \Hom_{Z_n}(\tau,\psi_{Z_n}^{-1})$. Note that $M_{n}\cong \GL_n(E)$. Given a complex number $s\in \BC$, we consider the induced representation $I(s,\tau)=\Ind_{M_n\ltimes V_n}^{\RU_{2n}}(\tau_s \otimes 1_{V_n})$, where $\tau_s$ is the representation of $M_n$ defined by 
$$\tau_s(m(a))=|\det(a)|_E^{s-1/2}\tau(a).$$
Thus the space of $I(s,\tau)$ is consisting of all smooth functions $f_s:\RU_{2n}\ra V_\tau $ satisfying 
$$f_s(m(a)u(x)h)=|\det(a)|_E^{s+\frac{n-1}{2}}\tau(a)f_s(h),a\in \GL_n(E),u(x)\in V_n,h\in \RU_{2n}.$$
For $f_s\in I(s,\tau)$, we consider the $\BC$-valued function 
$$\xi_{f_s}(g,a)=\lambda(\tau(a)f_s(g)),g\in \RU_{2n},a\in \GL_n(E).$$
From the quasi-invariance property of $f_s$, we have 
$$\xi_{f_s}(m(z)u(x)h,I_n)=\psi_{Z_n}^{-1}(z)\xi_{f_s}(h,I_n),  z\in Z_n, u(x)\in V_n, h\in \RU_{2n}.$$
Denote $V(s,\tau,\psi^{-1})=\wpair{\xi_{f_s}| f_s\in I(s,\tau)}$. 

Denote by $\tau^*$ the representation of $\GL_n(E)$ defined by $\tau^*(a)=\tau(a^*)$. Recall that $a^*=J_r {}^t\! \bar a ^{-1} J_r$. There is a standard intertwining operator $M(s,\tau): V(s,\tau,\psi^{-1})\ra V(1-s,\tau^*, \psi^{-1})$ defined by 
$$ M(s,\tau)\xi_s(g,a)=\int_{V_n}\xi_s(w_n^{-1}ug,d_n u)du, g\in \RU_{2n}, a\in \GL_n(E),$$
where $d_n=\diag(-1,1,\dots,(-1)^n)\in \GL_n$. It is well-known that $M(s,\tau)$ is well-defined for $\Re(s)>>0$ and can be meromorphically continued to the whole $\BC$-plane. 

\subsection{The local gamma factor}
Recall that $N$ is the upper triangular unipotent subgroup of $\RU_{2r+1}$. Consider the generic character $\psi_N$ of $N$ defined by 
$$\psi_N(u)=\psi(u_{12}+\dots +u_{r,r+1}),u=(u_{ij})\in N.$$
Let $\pi$ be an irreducible $\psi_U$-generic representation of $\RU_{2r+1}$. Denote by $\CW(\pi,\psi_N)$ its Whittaker model. Let $n\le r$ be a positive integer and $\tau$ be an irreducible $\psi_{Z_n}^{-1}$-generic representation of $\GL_n(E)$. For $W\in \CW(\pi,\psi_N)$ and $\xi_s\in V(s,\tau,\psi^{-1})$, we consider the following local zeta integral
$$\Psi(W,\xi_s)=\int_{N_{2n}\setminus \RU_{2n}}\int_{\Mat_{(r-n)\times n}}W\left(w_{n,r-n}\begin{pmatrix}I_{r-n}&x\\ &I_n\end{pmatrix}^\wedge h w_{n,r-n}^{-1}\right) \xi_s(h,I_n)dxdh.$$
Recall that $N_{2n}$ denotes the upper triangular unipotent subgroup of $\RU_{2n}$. The above local zeta integrals were studied in \cite{BAS}, where it was proved that the above integral is absolutely convergent for $\Re(s)>>0$, is nonzero for certain choices of $W,\xi_s$, defines a rational function of $q_E^{-s}$, and computes $\frac{L(s,\pi\times \tau)}{L(2s,Asai, \tau)}$ if every data is unramified. \\
\noindent \textbf{Remark:}  Some special cases of the above integrals were considered earlier in \cite{GPS, Ba97} when $n=r=1$, and in \cite{Ta} when $n=r\ge 1$.

\begin{prop}\label{prop1.1}
There exists a meromorphic function $\gamma(s,\pi\times \tau,\psi)$ such that
$$\Psi(W,M(s,\tau)\xi_s)=\gamma(s,\pi\times \tau,\psi)\Psi(W,\xi_s),$$
for any $W\in \CW(\pi,\psi)$ and $\xi_s\in V(s,\tau,\psi^{-1})$.
\end{prop}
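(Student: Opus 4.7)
The plan is to establish Proposition~\ref{prop1.1} by the standard Gelfand--Kazhdan / Piatetski--Shapiro uniqueness-of-bilinear-form strategy: exhibit both $\Psi(W,\xi_s)$ and $\Psi(W,M(s,\tau)\xi_s)$ as elements of a single (generically) one-dimensional space of bilinear forms, and then take the ratio to define $\gamma(s,\pi\times\tau,\psi)$. From \cite{BAS} we already have that $\Psi(W,\xi_s)$ is absolutely convergent for $\Re(s) \gg 0$, extends to a rational function of $q_E^{-s}$, and is not identically zero as $(W,\xi_s)$ varies; these analytic facts will be used freely.

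First I would show that for generic $s$ the map $(W,\xi_s)\mapsto \Psi(W,\xi_s)$ defines a bilinear form on $\CW(\pi,\psi_N)\otimes V(s,\tau,\psi^{-1})$ which transforms in a prescribed way under a diagonal embedding of $\RU_{2n}$ (conjugated by $w_{n,r-n}$) paired with the characters $\psi_N$ on the left and $\psi_{Z_n}^{-1}$ on the right. This equivariance is a direct change-of-variables consequence of the right $\RU_{2n}$-invariance of the quotient measure on $N_{2n}\bs \RU_{2n}$, the Whittaker quasi-invariance of $W$, and the quasi-invariance built into $\xi_{f_s}$. Next I would check that $\xi_s\mapsto \Psi(W, M(s,\tau)\xi_s)$ fits into the very same framework: the standard intertwining operator $M(s,\tau)\colon V(s,\tau,\psi^{-1})\ra V(1-s,\tau^*,\psi^{-1})$ is $\RU_{2n}$-equivariant and well-defined for generic $s$ by meromorphic continuation, so the composite sends $V(s,\tau,\psi^{-1})$ into the space of functionals already analyzed, with the same transformation law.

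The key input is the local uniqueness statement: for $s$ outside a discrete set, the space of bilinear forms on $\CW(\pi,\psi_N)\otimes V(s,\tau,\psi^{-1})$ satisfying the above equivariance is at most one-dimensional. This is essentially the multiplicity-one result that underlies the non-vanishing / rationality analysis in \cite{BAS}, and I would cite it directly if stated there; if not, it can be proved by a Bruhat decomposition of the relevant double coset space $\RU_{2n}\bs \RU_{2r+1}/P_n$ combined with uniqueness of the Whittaker functional on $\pi$ and on $\tau$, showing that orbits other than the open one contribute trivially to the equivariance space. Granting uniqueness, both bilinear forms must be proportional, the proportionality constant $\gamma(s,\pi\times\tau,\psi)$ is independent of $(W,\xi_s)$, and it is meromorphic in $s$ because each side is rational in $q_E^{-s}$ and the ambient integrals are nonzero for suitable data.

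The main obstacle is precisely the uniqueness step. Controlling the non-open Bruhat cells typically requires a delicate root-by-root analysis of unipotent integrations, showing that any putative non-zero contribution conflicts with the generic character $\psi_N$ on some root subgroup; for the embedding of $\RU_{2n}$ into $\RU_{2r+1}$ used here, the additional integration over $\Mat_{(r-n)\times n}$ in the definition of $\Psi(W,\xi_s)$ should supply exactly the extra quasi-invariance needed to eliminate such contributions. Once this is in place, the meromorphic continuation of $\gamma(s,\pi\times\tau,\psi)$ is automatic from the rationality results already quoted from \cite{BAS}.
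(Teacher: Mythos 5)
Your high-level strategy (exhibit both $\Psi(W,\xi_s)$ and $\Psi(W,M(s,\tau)\xi_s)$ as elements of a generically one-dimensional space of equivariant bilinear forms and take the ratio) is the same as the paper's, and your identification of where the equivariance lives --- a conjugate of $\RU_{2n}$ together with the extra unipotent integration over $\Mat_{(r-n)\times n}$ supplying additional quasi-invariance --- is exactly the structure the paper exploits. The paper packages this structure as a \emph{Bessel model}: it forms the subgroup $H=(U_X\cdot\RU_{2n})\ltimes N(X)$ of $\RU_{2r+1}$, the character $\lambda_X\otimes\sigma$ on $H$, and shows that both bilinear forms land in $\Hom_H(\pi^j,\lambda_X\otimes I(s,\tau))$.

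The genuine gap is your fallback plan for the uniqueness step. You write that the multiplicity-one statement, if not found in \cite{BAS}, ``can be proved by a Bruhat decomposition of $\RU_{2n}\bs \RU_{2r+1}/P_n$ combined with uniqueness of the Whittaker functional.'' This underestimates the difficulty by a wide margin. The needed input is precisely the uniqueness of Bessel models, i.e.\ $\dim\Hom_H(\pi,\lambda_X\otimes\sigma)\le 1$ for irreducible $\pi$ of $\RU_{2r+1}$ and irreducible $\sigma$ of $\RU_{2n}$. That is a deep theorem: for equal rank it is the Aizenbud--Gourevitch--Rallis--Schiffmann multiplicity-one theorem \cite{AGRS}, and the unequal-rank case is deduced from it in \cite[Theorem 15.1]{GGP}. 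Because $H$ contains the \emph{reductive} subgroup $\RU_{2n}$, not just unipotent groups twisted by a generic character, the elementary Gelfand--Kazhdan-style ``root-by-root'' orbit analysis you envision does not close the argument; the nontrivial orbit contributions are controlled in \cite{AGRS} by distributional/invariant-theoretic methods, not by killing them against $\psi_N$ on root subgroups. You should cite \cite{AGRS} and \cite{GGP} rather than attempt a direct proof. One further point the paper addresses that your sketch elides: the uniqueness statement is for $\sigma$ \emph{irreducible}, while the target representation here is $I(s,\tau)$, which may be reducible at special points. The paper handles this by observing that $I(s,\tau)$ is irreducible outside a finite set of values of $q_F^s$, applies the multiplicity-one bound for those $s$, and then concludes by meromorphic continuation.
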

\begin{proof}
If $n=r=1$, this is proved in \cite[Corollary 4.8]{Ba97}. The general case in fact follows from the uniqueness of Bessel models for $\RU_{2r+1}$, which is proved in \cite{GGP,AGRS}. Since we could not find appropriate reference, we provide some details here. We fix $n,r$ with $n\le r$. Temporarily, we denote $P(X)=M(X)N(X)$ the parabolic subgroup of $\RU_{2r+1}$ with Levi subgroup
$$M(X)=\wpair{\begin{pmatrix}a&&\\ &g&\\ &&a^* \end{pmatrix},a\in \GL_{r-n}(E),g\in \RU_{2n+1} },$$
and unipotent subgroup $N(X)$. Here the notation $X$ is nothing but just an index symbol. Denote by $U_X$ the upper triangular unipotent subgroup of $\GL_{r-n}(E)$, which is embedded into $M(X)$ in the natural way. Denote by $\psi_X$ the character on $U_X$ defined by 
$$\psi_X(u_{ij})=\psi\left(\sum_{i=1}^{r-n-1}u_{i,i+1}\right), u=(u_{ij})_{1\le i,j\le r-n}\in U_X.$$
We also consider the character $\psi'_X$ of $N(X)$ defined by 
$$\psi_X'(n)=\psi(n_{r-n,r+1}), n=(n_{ij})_{1\le i,j\le 2r+1}\in N(X)\subset \RU_{2r+1}.$$
For example, if $n=1,r=2$, we have 
$$\psi'_X\left(\begin{pmatrix}1&x_1&x_2&x_3&x_4\\ &1&&&x_3'\\ &&1&&x_2'\\ &&&1&x_1'\\ &&&&1 \end{pmatrix} \right)=\psi(x_2).$$

Temporarily, we denote by $H$ the following subgroup of $\RU_{2r+1}$:
$$H=(U_X\cdot \RU_{2n})\ltimes N(X).$$
Recall that we always identify $\RU_{2n}$ with a subgroup of $\RU_{2r+1}$ and with this identification, $U_X\cdot \RU_{2n}$ is a subgroup of $M(X)$. In matrix form, we have 
$$H=\wpair{\begin{pmatrix}u&*&*&*&*\\ &a&&b&*\\ &&1&&*\\&c&&d&*\\ &&&&u^* \end{pmatrix},u\in U_X, \begin{pmatrix}a&b\\ c&d \end{pmatrix}\in \RU_{2n},a,b,c,d\in \Mat_{n\times n}(E)}.$$
One important property for the character $\psi_X'$ is that 
$$\psi_X'(ung)=\psi_X'(n), \forall n\in N(X), u\in U_X, g\in \RU_{2n},$$
where $U_X$ and $\RU_{2n}$ are viewed as subgroup of $\RU_{2r+1}$ in the usual way. From this fact, we can extend $\psi_X'$ to a character of $H$.
In particular, for an irreducible representation $\sigma$ of $\RU_{2n}$, there is a representation $\lambda_X\otimes \sigma$ of $H$ such that $\lambda_X|_{U_X}=\psi_X,\lambda_X|_{N(X)}=\psi'_X,$ and $\lambda_X|_{\RU_{2n}}=\sigma$, see \cite[$\S$12]{GGP}. The pair $(H,\lambda_X\otimes \sigma)$ is called a Bessel data of $\RU_{2n+1}$. Given an irreducible smooth representation $\pi$ of $\RU_{2r+1}$ and an irreducible smooth representation $\sigma$ of $\RU_{2n}$, one has 
$$\dim \Hom_H(\pi,\lambda_X\otimes \sigma)\le 1,$$
by the main theorem of \cite{AGRS} (which deals with the equal rank case, i.e., when $n=r$) and \cite[Theorem 15.1]{GGP} (which deduce the general case from the equal rank case). A nonzero element in $\Hom_H(\pi,\psi_X\otimes \sigma)$ defines an embedding $\pi\incl \Ind_H^{\RU_{2r+1}}(\lambda_X\otimes \sigma)$, which is called the Bessel model of $\pi$ associated with the data $\lambda_X\otimes \sigma$.

We now go back to the proof of our proposition. Denote by $\pi^j$ the representation of $\RU_{2r+1}$ defined by $\pi^j(g)=\pi(w_{n,r-j}g w_{n,r-n}^{-1})$. It is routine (but tedious) to check that both of the bilinear forms $(W,\xi_s)\mapsto \Psi(W,\xi_s)$ and $(W,\xi_s)\mapsto (W,M(s,\tau)\xi_s)$ defined elements in 
$$\Hom_H(\pi^j, \lambda_X\otimes I(s,\tau)).$$
Here we identified the representation $\pi^j$ with its Whittaker model.
Since the induced representation $I(s,\tau)$ is irreducible outside a finite number of $q_F^s$, we get $\dim \Hom_H(\pi^j, \lambda_X\otimes I(s,\tau))\le 1 $ outside a finite number of $q_F^s$ by the uniqueness of Bessel models. This gives us the local functional equation and the existence of local gamma factors.
\end{proof}
\noindent \textbf{Remark:} It is expected that the above local gamma factor is the same as the local gamma factor defined from Langlands-Shahidi method up to a normalizing factor. Kaplan addressed this question for some other classical groups in \cite{Ka}, but unfortunately the unitary group case is not included in \cite{Ka}.

\section{Howe vectors and Partial Bessel functions}
In this section, we collected some results on Howe vectors and the associated partial Bessel functions we will need for the proof of the local converse theorem. The exposition here is quite similar to \cite[$\S$2]{Zh}, and we refer the reader to \cite[$\S$2]{Zh} for more details.

In the following, we will fix a positive integer $r$ and consider the group $G=\RU_{2r+1}$. Recall that we have identified $E^1$ with the center of $G$. Let $\omega$ be a character of $E^1$ and let $C_c^\infty(G,\omega)$ be the space of compactly supported smooth functions $f$ on $G$ such that $f(zg)=\omega(z)f(g)$ for all $z\in E^1$. Note that if $\pi$ is a an irreducible supercuspidal representation of $G$ with central character $\omega$, then the space $\CM(\pi)$ of matrix coefficients of $\pi$ is a subspace of $C_c^\infty(G,\omega)$. 

Recall that $B=AN$ is the upper triangular Borel subgroup with torus $A$ and maximal unipotent subgroup $N$. Let $\psi$ be an unramified non-trivial character of $E$ and let $\psi_U$ be the corresponding generic character of $N$. Denote by $C^\infty(G,\psi_N,\omega)$ the space of functions $W$ on $G$ such that $W(zg)=\omega(z)W(g)$, $W(ug)=\psi_N(u)W(g)$ for all $z\in E^1, u\in N, g\in G$, and there exists an open compact subgroup $K$ (depending on $W$) of $G$ such that $W(gk)=W(g)$ for all $g\in G,k\in K$. Note that if $\pi$ is a $\psi_U$-generic smooth representation of $G$, then its Whittaker model $\CW(\pi,\psi_N)$ is a subspace of $C^\infty(G,\psi_N,\omega)$.

\subsection{Howe vectors}
Let $m$ be a positive integer and $K_m^G=(I_{2r+1}+\Mat_{(2r+1)\times (2r+1)}(\fp_E^m))\cap G.$
Let $$e_m=\diag(\varpi_E^{-2rm},\varpi_E^{-2(r-1)m},\dots,\varpi_E^{-2m},1,\bar\varpi_E^{2m},\dots,\bar \varpi_E^{2rm})\in A.$$
Let $H_m=e_mK_m^G e_m^{-1}$. One can check that 
$$H_m=\begin{pmatrix}1+\fp_E^m & \fp_E^{-m} &\fp_E^{-3m} & \dots & \fp_E^{-(4r-1)m}\\ 
                                      \fp_E^{3m}& 1+\fp_E^m & \fp_E^{-m} &  \dots & \fp_E^{-(4r-3)m} \\
                                      \fp_E^{5m} & \fp_E^{3m} & 1+\fp_E^m & \dots & \fp_E^{-(4r-5)m}\\
                                     
                                      \dots & \dots & \dots & \dots &\dots \\
                                      \fp_E^{(4r+1)m} & \fp_E^{(4r-1)m} & \fp_E^{(4r-3)m}& \dots & 1+\fp_E^m
                              \end{pmatrix}\cap \RU_{2r+1}(F).$$

Let $\psi$ be a nontrivial unramified character of $E$ as above. We can define a character $\tau_m$ of $K_m^G$ by 
$$\tau_m((k_{ij}))=\psi(\varpi_E^{-2m}(\sum_{i=1}^r)k_{i,i+1}), k=(k_{i,j})\in K_m^G.$$
One can check that $\tau_m$ is indeed a character. Define a character $\psi_m$ of $H_m$ by $\psi_m(h)=\tau_m(e_m^{-1}he_m), h\in H_m$. Let $N_m=N\cap H_m$. Note that $N=\cup_{m\ge 1} N_m$ and $\psi_m|_{N_m}=\psi_N|_{N_m}$.

Let $\omega$ be a character of $E^1$. Given a function $W\in C^\infty(G,\psi_N,\omega)$ with $W(I_{2r+1})=1$ and a positive integer $m>0$, we consider the function $W_m$ on $G$ defined by 
$$W_m(g)=\frac{1}{\vol(N_m)}\int_{N_m}\psi_m(u)^{-1}W(gu)du.$$
Let $C=C(W)$ be an integer such that $W$ is fixed by $K_C^G$ on the right side. Following \cite{Ba95,Ba97}, a function $W_m$ with $m\ge C$ is called a \textbf{Howe vector}.

\begin{lem}\label{lem2.1}
We have 
\begin{enumerate}
\item $W_m(I_{2r+1})=1;$
\item if $m\ge C$, then $W_m(gh)=\psi_m(h)W_m(g)$ for all $h\in H_m;$
\item if $m\ge k$, then 
$$W_{m}(g)=\frac{1}{\vol(N_m)}\int_{N_m}\psi_{N}(u)^{-1}W_k(gu)du.$$
\end{enumerate}
\end{lem}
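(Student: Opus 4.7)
I would follow the standard Howe-vector approach, with (1) and (3) being direct calculations and (2) the essential content.

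For part (1), first verify $\psi_m|_{N_m}=\psi_N|_{N_m}$ by unwinding definitions: conjugation by $e_m^{-1}$ rescales each superdiagonal entry $u_{i,i+1}$ of $u\in N_m$ by $\varpi_E^{2m}$, exactly canceling the $\varpi_E^{-2m}$ factor in $\tau_m$. Then for $u\in N_m$ the integrand $\psi_m^{-1}(u)W(u)=\psi_m^{-1}(u)\psi_N(u)W(I_{2r+1})$ equals $1$, so $W_m(I_{2r+1})=1$.

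For part (3), expand $W_k(gu)$ using its defining integral, replace $\psi_k$ by $\psi_N$ on $N_k$, and apply Fubini to the resulting double integral over $N_m\times N_k$. The change of variable $w=uv$ (valid since $N_k\subseteq N_m$ for $m\geq k$) collapses this to a single integral over $N_m$, with the fiber volume $\vol(N_k)$ canceling against the normalization.

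For part (2), use the Iwahori-type factorization $H_m=N_m^-A_m N_m$ with $N_m^{\pm}=H_m\cap N^{\pm}$ and $A_m=H_m\cap A$ (this follows by conjugating the standard Iwahori decomposition of $K_m^G$ by $e_m$). The character $\psi_m$ is trivial on both $N_m^-$ and $A_m$ (their elements have vanishing superdiagonal), so $\psi_m(h_-h_0h_+)=\psi_N(h_+)$, and it suffices to verify equivariance factor by factor. For $h_+\in N_m$, apply the direct substitution $u\mapsto h_+u$ in the defining integral. For $h_0\in A_m$, substitute $v=h_0uh_0^{-1}$: the Jacobian is trivial (as $A_m\subseteq 1+\fp_E^m\subseteq\fo_E^{\times}$), $\psi_m$ is a character trivial on $A_m$, and the residual $h_0$ is absorbed by right-$K_C^G$-invariance of $W$ using $A_m\subseteq K_m^G\subseteq K_C^G$ when $m\geq C$.

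\textbf{Main obstacle.} The case $h_-\in N_m^-$ is the technical crux. One has $N_m^-\subseteq K_m^G\subseteq K_C^G$, so $W(gh_-)=W(g)$; however, this cannot be applied inside the defining integral for $W_m(gh_-)$ because $h_-$ sits to the left of $u\in N_m$. The resolution uses the identity $h_-u=u\cdot(u^{-1}h_-u)$ together with the Iwahori factorization $u^{-1}h_-u=L(u)D(u)U(u)$: entry-wise estimates on $H_m$ show that $L(u)D(u)\in K_m^G\subseteq K_C^G$, while the remaining $N_m$-piece $U(u)$ is absorbed by a measure-preserving change of variables $u\mapsto\phi(u)$ on $N_m$ (a bijection with trivial $p$-adic Jacobian, valid for $m\geq C$). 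Right-$K_C^G$-invariance of $W$ then disposes of $L(u)D(u)$, completing the verification.
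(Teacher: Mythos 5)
The paper does not spell out a proof of this lemma; it cites Baruch (\cite[Lemma 3.2]{Ba95}, \cite[Lemma 5.2]{Ba97}), whose argument is the standard one: for $m\ge C$, one shows that the $N_m$-average agrees with the $H_m$-average $T_m(g)=\tfrac{1}{\vol(H_m)}\int_{H_m}\psi_m(h)^{-1}W(gh)\,dh$, by using the Iwahori factorization $H_m=N_m A_m\bar N_m$, the triviality of $\psi_m$ on $A_m\bar N_m$, and the fact that $A_m\bar N_m\subset K_m^G\subset K_C^G$; part (2) is then a single change of variable in the $H_m$-integral. Your plan instead handles each Iwahori factor of $h$ separately. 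Parts (1) and (3) are correct and essentially forced. Part (2) for $h_+\in N_m$ and $h_0\in A_m$ is also fine (for $A_m$, the substance is conjugation-invariance of the character $\psi_m$ of $H_m$ plus triviality of $\psi_m$ on $A_m$, which is what you use).

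There are two points in the $\bar N_m$ case that, as written, need repair. First, the ordering: you write $u^{-1}h_-u=L(u)D(u)U(u)$ with $L$ lower, $D$ diagonal, $U$ upper, so that $h_-u=u\,L(u)D(u)U(u)$. Then $L(u)D(u)$ sits in the \emph{middle}, not on the right, and right-$K_C^G$-invariance of $W$ cannot ``dispose of'' it. You must use the opposite ordering $u^{-1}h_-u=U(u)D(u)L(u)$, i.e.\ $H_m=N_m A_m\bar N_m$, so that $h_-u=\phi(u)D(u)L(u)$ with $\phi(u)=uU(u)\in N_m$, and then $W(gh_-u)=W(g\phi(u))$ because $D(u)L(u)\in A_m\bar N_m\subset K_m^G\subset K_C^G$. (Similarly $\psi_m(U(u))=1$ follows from $\psi_m(u^{-1}h_-u)=\psi_m(h_-)=1$ together with triviality on $A_m\bar N_m$, so $\psi_m(\phi(u))=\psi_m(u)$.) Second, the assertion that $u\mapsto\phi(u)$ is a measure-preserving bijection of $N_m$ is stated without justification; it is true, but it needs an argument. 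One clean way: $\phi$ is exactly the map induced by left translation by $h_-$ on $H_m/(A_m\bar N_m)\cong N_m$, and since $H_m$ is compact (hence unimodular) and $A_m\bar N_m$ is a closed subgroup, the quotient measure on $H_m/(A_m\bar N_m)$ is left-$H_m$-invariant. Alternatively, you avoid this entirely by the $T_m$ trick above, which is the route Baruch takes: one never needs to analyze the bijection $u\mapsto\phi(u)$, because the measure-theoretic bookkeeping is absorbed into the passage from the $N_m$-average to the $H_m$-average.
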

The proof of the above lemma in similar situations can be found in \cite[Lemma 3.2]{Ba95} or \cite[Lemma 5.2]{Ba97}. The same proof applied in our case.

By Lemma \ref{lem2.2}(2), for $m\ge C$, the function $W_m$ satisfies the relation 
\begin{equation}\label{eq2.1}
W_m(ugh)=\psi_N(u)\psi_m(h)W_m(g), \forall u\in N, g\in G, h\in H_m.
\end{equation}
Due to this relation, we also call $W_m$ a partial Bessel function. 

\begin{lem}\label{lem2.2}
For $m\ge C, t\in A$, if $W_m(t)\ne 0$, then $\alpha_i(t)=1+\fp_E^m$ for all $i$ with $1\le i\le r$. 
\end{lem}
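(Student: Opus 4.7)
The plan is to exploit the two equivariance properties of the partial Bessel function $W_m$: the left $(N,\psi_N)$-equivariance that it inherits from being built out of $W\in\CW(\pi,\psi_N)$, and the right $(H_m,\psi_m)$-equivariance granted by Lemma~\ref{lem2.1}(2). Since the torus $A$ normalizes $N$, for any $x\in N\cap H_m$ we have $tx=(txt^{-1})\,t$ with $txt^{-1}\in N$, so that
\[
\psi_m(x)\,W_m(t)\;=\;W_m(tx)\;=\;W_m\!\bigl((txt^{-1})\,t\bigr)\;=\;\psi_N(txt^{-1})\,W_m(t).
\]
Under the assumption $W_m(t)\ne 0$ this gives the relation $\psi_m(x)=\psi_N(txt^{-1})$ for every $x\in N\cap H_m$. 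The whole proof is then to pick, for each simple root $\alpha_i$, an element $x$ that turns this identity into the desired congruence on $\alpha_i(t)$.

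For each $1\le i\le r$, I would take $x=\bx_{\alpha_i}(u)$ for an arbitrary $u\in\fp_E^{-m}$; the shape of $e_m$ ensures that such an element lies in $H_m$. By definition of a root subgroup, conjugation by $t$ acts by the character $\alpha_i$, so $txt^{-1}=\bx_{\alpha_i}(\alpha_i(t)\,u)$. Since $\bx_{\alpha_i}(v)$ is upper-triangular unipotent with $(i,i+1)$-entry equal to $v$ and whose other potentially nonzero entries (for $i<r$ at position $(2r+1-i,2r+2-i)$, for $i=r$ at position $(r+1,r+2)$, plus possibly at $(i,2r+2-i)$) all lie outside the positions $(j,j+1)$ with $1\le j\le r$ used in defining $\psi_N$, we immediately get $\psi_N(\bx_{\alpha_i}(v))=\psi(v)$.

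A direct computation with the definition $\psi_m(h)=\tau_m(e_m^{-1}he_m)=\psi\bigl(\varpi_E^{-2m}\sum_{j=1}^{r}(e_m^{-1}he_m)_{j,j+1}\bigr)$ shows that conjugation by $e_m$ scales the $(i,i+1)$-entry by $\varpi_E^{2m}$ for each $1\le i\le r$, so that the factor $\varpi_E^{-2m}$ in the definition of $\tau_m$ is cancelled, yielding $\psi_m(\bx_{\alpha_i}(u))=\psi(u)$. Substituting these two evaluations into the identity $\psi_m(x)=\psi_N(txt^{-1})$ gives
\[
\psi\bigl((1-\alpha_i(t))\,u\bigr)\;=\;1\qquad\text{for every }u\in\fp_E^{-m}.
\]
Because $\psi$ is unramified, i.e.\ has conductor $\fo_E$, this forces $1-\alpha_i(t)\in\fp_E^{m}$, which is exactly the claim.

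The only part that needs any care is the cancellation $\varpi_E^{-2m}\cdot\varpi_E^{2m}=1$ in the evaluation of $\psi_m$ on a root vector for \emph{every} simple root, including $\alpha_r$: one needs to check both that $\bx_{\alpha_i}(u)\in H_m$ for $u\in\fp_E^{-m}$ and that the other nonzero entries of $\bx_{\alpha_i}(u)$ do not intrude into the defining sum of $\psi_m$. Once this bookkeeping is in place, the argument is uniform in $i$ and the lemma follows at once.
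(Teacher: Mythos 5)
Your proposal is correct and is essentially the paper's own argument: both exploit the identity $t\,\bx_{\alpha_i}(x) = \bx_{\alpha_i}(\alpha_i(t)x)\,t$ together with the left $(N,\psi_N)$- and right $(H_m,\psi_m)$-equivariance of $W_m$ (Eq.~(\ref{eq2.1})), then conclude from unramifiedness of $\psi$. The only difference is that you spell out the $e_m$-conjugation bookkeeping showing $\psi_m(\bx_{\alpha_i}(u)) = \psi(u)$ and the positions of the extra nonzero entries of the root vectors, which the paper takes as known (citing Baruch).
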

\begin{proof}
The proof is the same as in some other cases considered by Baruch, see \cite[Lemma 6.2.9]{Ba95} and \cite[Lemma 5.4]{Ba97}. We give some details here. Given $x\in \fp_E^{-m}$, we have the relation $$t\bx_{\alpha_i}(x)=\bx_{\alpha_i}(\alpha_i(t)x)t.$$
Since $\bx_{\alpha_i}(x)\in N_m$, and $\psi_m(\bx_{\alpha_i}(x))=\psi(x)$, by Eq.(\ref{eq2.1}) we get $$\psi(x)W_m(t)=\psi(\alpha_i(t)x)W_m(t).$$
Thus if $W_m(t)\ne 0$, we have $\psi((\alpha_i(t)-1)x)=1$ for all $x\in \fp_E^{-m}$. Since $\psi$ is unramified, we have $\alpha_i(t)\in 1+\fp_E^m$.
\end{proof}
\begin{cor}\label{cor2.3}
Let $t=z\diag(a_1,a_2,\dots,a_r,1,\bar a_r^{-1},\dots,\bar a_1^{-1})\in A$ with $z\in E^1, a_i\in E^\times, 1\le i\le r$, and $m\ge C$, then we have 
$$W_m(t)=\left\{\begin{array}{lll}\omega(z), & \textrm{if } a_i\in 1+\fp_E^m, 1\le i\le r,\\ 0, & \textrm{otherwise.} \end{array} \right.$$
\end{cor}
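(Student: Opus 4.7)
The plan is to derive Corollary \ref{cor2.3} from Lemma \ref{lem2.2} together with two easy computations: one showing that the relevant diagonal element lies in $H_m$ when the $a_i$ are close to $1$, and one showing that $\psi_m$ is trivial on the diagonal torus of $H_m$.

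First I would dispense with the vanishing half. Let $t=z\diag(a_1,\dots,a_r,1,\bar a_r^{-1},\dots,\bar a_1^{-1})$ and write $t=z\,t_0$ with $t_0$ the $\diag$-part. Since $z$ is central, $W_m(t)=\omega(z)W_m(t_0)$, so it suffices to analyze $W_m(t_0)$. If $W_m(t_0)\ne 0$, Lemma \ref{lem2.2} gives $\alpha_i(t_0)\in 1+\fp_E^m$ for every $1\le i\le r$. Reading the roots explicitly, this means $a_r\in 1+\fp_E^m$ and $a_i/a_{i+1}\in 1+\fp_E^m$ for $1\le i\le r-1$; a backwards induction on $i$ starting from $i=r$ then forces $a_i\in 1+\fp_E^m$ for all $i$. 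Contrapositively, if some $a_i\notin 1+\fp_E^m$ then $W_m(t)=0$.

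For the remaining case, assume $a_i\in 1+\fp_E^m$ for $1\le i\le r$. The key observation is that the diagonal matrix $t_0$ lies in $H_m$: indeed all its diagonal entries are in $1+\fp_E^m$ (using $\ov{1+\fp_E^m}=1+\fp_E^m$), and all off-diagonal entries vanish, hence sit inside the prescribed powers of $\fp_E$ describing $H_m$. Next I would compute $\psi_m(t_0)=\tau_m(e_m^{-1}t_0 e_m)=\tau_m(t_0)$, the last equality because $e_m$ and $t_0$ are both diagonal and commute. By definition $\tau_m$ depends only on the entries $k_{i,i+1}$ just above the diagonal, which are zero for $t_0$; hence $\psi_m(t_0)=1$.

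Finally I would combine these with the equivariance $W_m(gh)=\psi_m(h)W_m(g)$ from Lemma \ref{lem2.1}(2), applied with $g=z$ (central) and $h=t_0\in H_m$:
\[
W_m(t)=W_m(z\,t_0)=\psi_m(t_0)\,W_m(z)=W_m(z)=\omega(z)\,W_m(I_{2r+1})=\omega(z),
\]
using Lemma \ref{lem2.1}(1) in the last step. This yields the claimed value $\omega(z)$, and together with the first paragraph completes the proof. I do not anticipate a real obstacle: the only point that requires a moment's care is checking that the off-diagonal conditions cutting out $H_m$ are vacuously satisfied by a diagonal element, and that the upper-index entries $\bar a_i^{-1}$ still land in $1+\fp_E^m$, both of which are immediate.
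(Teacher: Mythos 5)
Your proof is correct and follows essentially the same route as the paper: Lemma \ref{lem2.2} forces the conditions $a_i\in 1+\fp_E^m$ when $W_m(t)\ne 0$, and then the diagonal part lies in $H_m$ with $\psi_m$ trivial on it, so the equivariance of Lemma \ref{lem2.1} gives $W_m(t)=\omega(z)$. You are a bit more explicit than the paper in two small places — spelling out the backwards induction on $i$ to pass from $\alpha_i(t_0)\in 1+\fp_E^m$ to $a_i\in 1+\fp_E^m$, and verifying $\psi_m(t_0)=1$ directly — but these are exactly the details the paper leaves implicit, not a different argument.
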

\begin{proof}
By Lemma \ref{lem2.2}, if $W_m(t)\ne 0$, we have $\alpha_i(t)\in 1+\fp_E^m$. Thus we get $a_1,\dots,a_r\in 1+\fp_E^m$ from the definition of $\alpha_i$. Then we have $\diag(a_1,\dots,a_r,1,\bar a_r^{-1},\dots,\bar a_1^{-1})\in H_m$. Then by Lemma \ref{lem2.1}, we have 
$$W_m(t)=W_m(z)=\omega(z)W_m(I_{2r+1})=\omega(z).$$
This concludes the proof.
\end{proof}
Given $f\in C_c^\infty(G,\omega)$, we consider 
$$W^f(g)=\int_N \psi_N^{-1}(u)f(ug)du.$$
Note that the above integral is well-defined since $Ng$ is closed in $G$ and $f$ has compact support in $G$. In fact we have $W^f\in C^\infty(G,\psi_U,\omega)$. We assume that there exists a function $f\in C_c^\infty(G,\omega)$ such that $W^f(I_{2r+1})=1$. For sufficiently large $m$, we consider the corresponding partial Bessel function 
\begin{equation}\label{eq2.2}B_m(g,f):=(W^f)_m(g)=\frac{1}{\vol(N_m)}\int_{N\times N_m}\psi_N^{-1}(u)\psi_m^{-1}(u')f(ugu')dudu'.\end{equation}

\subsection{Weyl group}

Let $\bW=\bW(G)$ be the Weyl group of $G=\RU_{2r+1}$. Denote by $1$ the unit element in $\bW$. For $w\in \bW$, denote $C(w)=BwB$. The Bruhat order on $\bW$ is defined as follows. Given $w_1,w_2\in \bW$, then $w_1\le w_2$ if and only if $C(w_1)\subset \ov{C(w_2)}$. For $w\in \bW$, we denote $\Omega_w=\coprod_{w'\ge w}C(w')$. Then $C(w)$ is closed in $\Omega_w$ and $\Omega_w$ is open in $G$.

Let $B(G)$ be the subset of $\bW$ which support Bessel functions, i.e., $w\in B(G)$ if and only if for every simple root $\alpha\in \Delta$, if $w\alpha>0$, then $w\alpha$ is also simple. Let $w_\ell=J_{2r+1}\in G$, which represents the longest Weyl element of $G$.  It is well-known that $w\in B(G)$ if and only if $w_\ell w$ is the longest Weyl element of the Levi subgroup of a standard parabolic subgroup of $G$. For $w\in B(G)$, let $P_w=M_wN_w$ be the corresponding parabolic subgroup such that $w_\ell w=w_\ell^{M_w}$, where $M_w$ is the Levi subgroup of $P_w$ and $w_\ell^{M_w}$ is the longest Weyl element of $M_w$. Let $\theta_w$ be the subset of $\Delta$ which consists all simple roots in $M_w$. Then we have the relation 
$$\theta_w=\wpair{\alpha\in \Delta| w\alpha>0}\subset \Delta.$$
The assignment $w\mapsto \theta_w$ is a bijection between $B(G)$ and subsets of $\Delta$. Moreover, it is known that the assignment $w\mapsto \theta_w$ is order-reversing, i.e., $w'\le w$ if and only if $\theta_w\subset \theta_{w'}$, see \cite[Proposition 2.11]{CPSS}. For example, we have $\theta_{w_\ell}=\emptyset$ and $\theta_{1}=\Delta$.

 Given a subset $\theta\subset \Delta$, we will write the corresponding Weyl element in $B(G)$ by $w_\theta$. 

\begin{lem}\label{lem2.4}
For every $n$ with $1\le n\le r$, we have $\tilde w_n=w_{\Delta-\wpair{\alpha_n}}$.
\end{lem}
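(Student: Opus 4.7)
My plan is to verify the lemma by a direct calculation, followed by an appeal to the order-reversing bijection $w \mapsto \theta_w$ between $B(G)$ and subsets of $\Delta$ recalled just above the statement. It suffices to show that $\tilde w_n \in B(G)$ and that $\theta_{\tilde w_n} = \Delta - \{\alpha_n\}$, i.e., that $\tilde w_n$ sends $\alpha_i$ to a positive simple root for $i \ne n$ and to a negative root for $i = n$.

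From the explicit block form $\tilde w_n = \begin{pmatrix} & & I_n \\ & I_{2(r-n)+1} & \\ I_n & & \end{pmatrix}$, one reads off that $\tilde w_n$ is the permutation matrix for the involution in $S_{2r+1}$ that swaps $i \leftrightarrow 2r+1-n+i$ for $1 \le i \le n$ and fixes the middle positions $n+1, \dots, 2r-n+1$. Conjugating a generic torus element $t = z\diag(a_1,\dots,a_r,1,\bar a_r^{-1},\dots,\bar a_1^{-1})$ by $\tilde w_n$ and rewriting the answer in the standard form yields
$$\tilde w_n^{-1} t\, \tilde w_n = z\diag(b_1,\dots,b_r,1,\bar b_r^{-1},\dots,\bar b_1^{-1}),$$
with $b_i = \bar a_{n+1-i}^{-1}$ for $1 \le i \le n$ and $b_i = a_i$ for $n+1 \le i \le r$. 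Evaluating each simple root on this (and using $\bar a_j = a_j$ on the $F$-split part of $A$, where the relative root system lives) gives $(\tilde w_n \alpha_i)(t) = a_{n-i}/a_{n+1-i} = \alpha_{n-i}(t)$ for $1 \le i \le n-1$; $(\tilde w_n \alpha_n)(t) = (a_1 a_{n+1})^{-1}$ when $n < r$ (a negative root), or $a_1^{-1}$ when $n = r$ (again negative); and $(\tilde w_n \alpha_i)(t) = \alpha_i(t)$ for $n+1 \le i \le r$. Hence $\{\alpha \in \Delta : \tilde w_n \alpha > 0\} = \Delta - \{\alpha_n\}$, and since every such positive image is itself simple we also get $\tilde w_n \in B(G)$. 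The bijection then forces $\tilde w_n = w_{\Delta - \{\alpha_n\}}$.

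The main subtlety is keeping the permutation underlying $\tilde w_n$ straight: the top-right $I_n$ block moves the first $n$ coordinates to the last $n$ positions in their natural order rather than reversing them. This is what produces the ``twisted'' identification $\tilde w_n \alpha_i = \alpha_{n-i}$ for $i < n$ and singles out $\alpha_n$ as the unique simple root sent to a negative root. Beyond this bookkeeping, the argument is a routine torus computation, with one further mild point being that in the $BC_r$ relative root system one should verify that $-(e_1+e_{n+1})$ (respectively $-e_1$ when $n = r$) is indeed a negative root, which is immediate from expanding in the simple-root basis.
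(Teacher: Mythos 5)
Your proof is correct. The high-level strategy is the same as the paper's (identify $\theta_{\tilde w_n}$ and invoke the order-reversing bijection $w\mapsto\theta_w$), but the intermediate computation is different. The paper computes $w_\ell\tilde w_n = \diag(J_n, J_{2(r-n)+1}, J_n)$ in one stroke and recognizes it as the long Weyl element of the Levi $M_{\tilde w_n}\cong \GL_n\times\RU_{2(r-n)+1}$, from which $\theta_{\tilde w_n}=\Delta-\{\alpha_n\}$ is read off; this simultaneously certifies $\tilde w_n\in B(G)$ via the characterization ``$w_\ell w$ is a long Levi element.'' You instead compute the action of $\tilde w_n$ on simple roots directly, finding $\tilde w_n\alpha_i=\alpha_{n-i}$ for $i<n$, $\tilde w_n\alpha_i=\alpha_i$ for $i>n$, and $\tilde w_n\alpha_n<0$, which likewise gives both $\tilde w_n\in B(G)$ (every positive image is simple) and $\theta_{\tilde w_n}=\Delta-\{\alpha_n\}$. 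I checked your torus conjugation: $b_i=\bar a_{n+1-i}^{-1}$ for $i\le n$ and $b_i=a_i$ for $i>n$ is right, as are the resulting root values (including the $n=r$ boundary case and the observation that $-(e_1+e_{n+1})$, resp. $-e_1$, is negative in $BC_r$). The paper's route is quicker; yours makes explicit the nontrivial permutation $\alpha_i\mapsto\alpha_{n-i}$ on the $\GL_n$ part of $\theta_{\tilde w_n}$, which is just the action of $w_\ell^{\GL_n}$ inside the Levi. Both are valid.
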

\begin{proof}
We have $$w_\ell \tilde w_n=\begin{pmatrix}J_n &&\\ &J_{2(r-n)+1}&\\&& J_n \end{pmatrix},$$
which is the longest Weyl element in the Levi subgroup 
$$M_{\tilde w_n}=\begin{pmatrix}a&&\\ &b&\\ &&a^* \end{pmatrix},a\in \GL_n, b\in \RU_{2(r-n)+1}.$$
Thus $\theta_{\tilde w_n}=\Delta-\wpair{\alpha_n}.$
\end{proof}

Given $w,w'\in B(G)$ with $w>w'$, define (following Jacquet \cite{J})
$$d_B(w,w')=\max\wpair{m| \textrm{ there exist }w'_i\in B(G) \textrm{ with } w=w'_m>w'_{m-1}>\dots>w'_0=w'}.$$
The number $d_B(w,w')$ is called the Bessel distance of $w,w'$. By \cite[Proposition 2.1]{CPSS} and Lemma \ref{lem2.4}, the set of elements in $B(G)$ which has Bessel distance 1 with the element $1\in B(G)$ are $\wpair{\tilde w_n, 1\le n\le r}$, i.e.,
\begin{equation}\label{eq2.3}\wpair{w| d_B(w,1)=1}=\wpair{\tilde w_n| 1\le n\le r}.\end{equation}

For $w,w'\in \bW$ with $w<w'$, we denote by $[w,w']$ the closed Bruhat interval $\wpair{w''\in \bW| w\le w''\le w'}$.
\begin{lem}\label{lem2.5}
Given an integer $n$ with $1\le n\le r$. The set $w\in \bW$ with $C(w)\subset P_n\tilde w_n P_n$ is a closed Bruhat interval $[w_{\min},w_{\max}]$ with $w_{\min}=\tilde w_n$ and $w_{\max}=w_{\ell}^{L_n}\tilde w_n$, where $\tilde w_{\ell}^{L_n}$ is the long Weyl element of $L_n$.
\end{lem}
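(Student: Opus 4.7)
The plan is to reduce this to a general fact about parabolic right cosets in Coxeter groups, applied to $\bW$ and the standard parabolic subgroup $W(L_n)$ of $\bW$ generated by the simple reflections $s_1,\dots,s_{n-1}$ corresponding to $\alpha_1,\dots,\alpha_{n-1}$ (i.e., the Weyl group of the Levi $L_n$). First, using the factorization $P_n=BW(L_n)B$ together with the Bruhat decomposition of $G$, one obtains the disjoint union
$$P_n\tilde w_n P_n=\bigsqcup_{w\in W(L_n)\tilde w_n W(L_n)}C(w),$$
so the task reduces to showing that $W(L_n)\tilde w_n W(L_n)$, viewed as a subset of $(\bW,\le)$, is a closed Bruhat interval with endpoints $\tilde w_n$ and $w_\ell^{L_n}\tilde w_n$.

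Next I would simplify this double coset. By Lemma~\ref{lem2.4}, $\theta_{\tilde w_n}=\Delta-\{\alpha_n\}$, so the defining property of $B(G)$ forces $\tilde w_n$ to send every root in $\{\alpha_1,\dots,\alpha_{n-1}\}$ to a simple root. An explicit computation on the diagonal torus (using that $\tilde w_n$ swaps the first $n$ entries with the last $n$ in reversed order, with inversion) yields $\tilde w_n\alpha_i=\alpha_{n-i}$ for $1\le i\le n-1$, so $\tilde w_n$ permutes the simple roots of $L_n$ among themselves and thus normalizes $W(L_n)$. In particular $W(L_n)\tilde w_n W(L_n)=W(L_n)\tilde w_n$. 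The same positivity, together with the Coxeter criterion $s_iw>w\Leftrightarrow w^{-1}\alpha_i>0$ and the fact that $\tilde w_n$ has order $2$, shows $s_i\tilde w_n>\tilde w_n$ for every simple reflection $s_i\in W(L_n)$, so $\tilde w_n$ is the minimum-length representative of the right coset $W(L_n)\tilde w_n$.

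Then I would invoke the standard Coxeter-theoretic fact (see, e.g., Bj\"orner--Brenti, \emph{Combinatorics of Coxeter Groups}, Prop.~2.5.1) that right multiplication by a minimum-length representative $v$ of a right $W(L_n)$-coset gives a length-additive, Bruhat-order-preserving bijection $W(L_n)\to W(L_n)v$. Applied to $v=\tilde w_n$, this identifies $W(L_n)\tilde w_n$, as a subposet of $\bW$, with $W(L_n)$ under its own Bruhat order; its minimum is $\tilde w_n$ and its maximum is $w_\ell^{L_n}\tilde w_n$.

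The subtle point, which I expect to be the main obstacle, is ruling out elements of $\bW$ lying outside $W(L_n)\tilde w_n$ but still inside the closed Bruhat interval $[\tilde w_n,w_\ell^{L_n}\tilde w_n]$. For this I would use the standard fact that the projection $\bW\to W(L_n)\backslash \bW$ sending each $w$ to the minimum-length representative of its right coset is order-preserving for Bruhat order. Given $\tilde w_n\le w\le w_\ell^{L_n}\tilde w_n$, applying this projection yields $\tilde w_n\le v\le \tilde w_n$ (since both endpoints have minimum-length representative $\tilde w_n$ by the previous step), forcing $v=\tilde w_n$ and hence $w\in W(L_n)\tilde w_n$, which completes the proof.
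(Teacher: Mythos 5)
Your proof is correct. The paper itself does not present an argument for this lemma but defers entirely to \cite[Lemma 2.5]{Zh}, so a line-by-line comparison is not possible; the route you take, however, is the natural Coxeter-theoretic one and is the argument one would expect to find there.

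A few points worth noting. The reduction $P_n\tilde w_n P_n=\bigsqcup_{w\in W(L_n)\tilde w_n W(L_n)}C(w)$ via $P_n=BW(L_n)B$ is standard and correct. The key structural observation that makes the rest go through is that $\tilde w_n$ normalizes $W(L_n)$, turning the double coset into a single right coset $W(L_n)\tilde w_n$; this is not automatic for a general Bessel-supporting Weyl element, and you rightly verify it by computing the action on simple roots. One small point of care: the naive computation $\alpha_i(\tilde w_n t\tilde w_n^{-1})=\ov{\alpha_{n-i}(t)}$ produces a Galois conjugate, which only reduces to $\alpha_{n-i}$ once you pass to the relative root system (i.e.\ restrict to the maximal $F$-split torus, where $\bar a_i=a_i$); since $\bW$ here is the relative Weyl group acting on the relative roots, this is exactly the right setting, but it deserves a sentence. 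After that, the two ingredients you invoke are both genuine standard facts: (i) right multiplication by the minimal-length coset representative $v$ gives a length-additive order isomorphism $W_J\to W_Jv$ onto its image in $(\bW,\le)$, establishing that $W(L_n)\tilde w_n$ has minimum $\tilde w_n$ and maximum $w_\ell^{L_n}\tilde w_n$ and lies inside the interval; and (ii) the projection $w\mapsto{}^J w$ onto minimal-length representatives of $W_J\backslash\bW$ is order-preserving (Deodhar's lemma / Bj\"orner--Brenti \S2.5), which pins down the interval as being exactly $W(L_n)\tilde w_n$. Identifying and invoking fact (ii) to rule out extraneous elements of the interval is indeed the crux, and you handle it correctly.
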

The proof is the same as the proof of \cite[Lemma 2.5]{Zh} and we omit the details here. 

\subsection{Cogdell-Shahidi-Tsai's theory on partial Bessel functions} In this subsection, we review certain basic properties of partial Bessel functions developed by Cogdell-Shahidi-Tsai recently in \cite{CST}.

For $w\in B(G)$, we denote 
$$A_w=\wpair{a\in A| \alpha(a)=1 \textrm{ for all }\alpha\in \theta_w}.$$
The set $A_w$ is in fact the center of $M_w$.
\begin{thm}[Cogdell-Shahidi-Tsai] \label{thm2.6} Let $\omega$ be a character of $E^1$.
\begin{enumerate}
\item Let $w\in \bW$, $m>0$ and $f\in C_c^\infty(\Omega_w,\omega)$. Suppose $B_m(aw,f)=0$ for all $a\in A_w$. Then there exists $f_0\in C_c^\infty(\Omega_w-C(w), \omega)$ which only depends on $f$, such that for sufficiently large $m$ depending only on $f$, we have $B_m(g,f)=B_m(g,f_0)$ for all $g\in G$. 
\item Let $w\in B(G)$. Let $\Omega_{w,0}$ and $\Omega_{w,1}$ be $N\times N$ and $A$-invariant open sets of $\Omega_w$ such that $\Omega_{w,0}\subset \Omega_{w,1}$ and $\Omega_{w,1}-\Omega_{w,0}$ is a union of Bruhat cells $C(w')$ such that $w'$ does not support a Bessel function, i.e., $w'\notin B(G)$. Then for any $f_1\in C_c^\infty(\Omega_{w,1}, \omega)$ there exists $f_0\in C_c^\infty(\Omega_{w,0},\omega)$ such that for all sufficiently large $m$ depending only on $f_1$, we have 
$B_m(g,f_0)=B_m(g,f_1),$ for all $g\in G$.
\end{enumerate}
\end{thm}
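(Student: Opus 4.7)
Both statements fall under the same principle: the partial Bessel function $B_m(\cdot, f)$ depends on $f$ only through its contributions along cells $C(w')$ that support Bessel functions, and each such contribution is controlled by the evaluations on the one-parameter family $A_{w'} w'$. The plan is to localize $f$ via a partition of unity along the Bruhat stratification of $\Omega_w$, and then analyze one cell at a time using the bi-quasi-invariance of $B_m(\cdot, f)$ expressed by (\ref{eq2.1}), together with the compactness of $\supp(f)$.

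For part~(1), write $f = f_{\mathrm{near}} + f_0$, where $f_{\mathrm{near}} \in C_c^\infty(\CU, \omega)$ for a small $N \times H_m$-saturated open neighborhood $\CU$ of $C(w)$ inside $\Omega_w$, and $f_0 \in C_c^\infty(\Omega_w \setminus C(w), \omega)$. The task reduces to showing $B_m(\cdot, f_{\mathrm{near}}) \equiv 0$ for $m$ sufficiently large depending only on $f$. Since $C(w) = N w A N$ is closed in $\Omega_w$, an Iwahori-type factorization of $\CU$ together with the quasi-invariance (\ref{eq2.1}) yields, for $u_1 \in N$, $u_2 \in N \cap H_m$ and $a$ in a compact subset of $A$ determined by $\supp(f_{\mathrm{near}})$, an identity of the form
\begin{equation*}
B_m(u_1 \, a w \, u_2, \, f_{\mathrm{near}}) \;=\; \psi_N(u_1)\, \psi_m(u_2)\, B_m(aw,\, f_{\mathrm{near}}).
\end{equation*}
Because $w \in B(G)$ (the case $w \notin B(G)$ is subsumed by~(2)), any $a \in A$ factors as $a_0 a'$ with $a_0 \in A_w$ and $a'$ conjugating via $w$ into a subgroup of $N \cap H_m$ on which $\psi_m$ becomes trivial once $m$ is large; consequently $B_m(aw, f_{\mathrm{near}})$ is determined by its restriction to $a \in A_w$. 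The hypothesis $B_m(aw, f) = 0$ for $a \in A_w$, together with the vanishing $B_m(aw, f_0) = 0$ (valid once $m$ is large enough that the defining integrals collapse given that $\supp(f_0)$ avoids $C(w)$), forces $B_m(aw, f_{\mathrm{near}}) = 0$ and hence $B_m(\cdot, f_{\mathrm{near}}) \equiv 0$ on $\CU$ by the displayed equation, and globally by support considerations.

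For part~(2), stratify $\Omega_{w,1} \setminus \Omega_{w,0}$ into its Bruhat cells $C(w')$ with $w' \notin B(G)$, and peel them off one at a time via a partition of unity, from largest to smallest in the Bruhat order. At each step it suffices to show: for $h \in C_c^\infty$ supported in a small saturated neighborhood of a fixed cell $C(w')$ with $w' \notin B(G)$, one has $B_m(\cdot, h) \equiv 0$ for $m$ large. The defining property of $w' \notin B(G)$ provides a simple root $\alpha \in \Delta$ with $w'\alpha > 0$ but $w'\alpha \notin \Delta$. Then $U_{w'\alpha} \subset N$ lies in the kernel of $\psi_N$, whereas $U_\alpha$ does not. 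Exploiting the factorization of $N$ relative to $w'$ in the definition (\ref{eq2.2}), one identifies inside the integral defining $B_m(g,h)$ an inner integral of a nontrivial additive character against a constant factor over a compact subgroup of $U_\alpha$ (or, dually, of a trivial character against $\psi_N^{-1}$ over $U_{w'\alpha}$), which vanishes. Hence each non-Bessel-supporting cell contributes nothing, and the leftover $f_0 \in C_c^\infty(\Omega_{w,0}, \omega)$ does the job.

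The main obstacle is the germ-type reduction in part~(1) that controls $B_m(\cdot, f_{\mathrm{near}})$ on a neighborhood of $C(w)$ by the finite data $\{B_m(aw, f_{\mathrm{near}}) : a \in A_w\}$, uniformly for $m$ larger than a constant depending only on $f$. This requires a careful interplay between the Iwahori factorization around $C(w)$, the bi-equivariance (\ref{eq2.1}), and sharp volume estimates, so that no dependence on the running $g$ leaks into the threshold for $m$. The cell-peeling argument in~(2) is comparatively formal once the root-theoretic characterization of $B(G)$ from \cite[Proposition 2.11]{CPSS} is in hand.
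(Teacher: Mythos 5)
The paper does not actually prove Theorem~\ref{thm2.6}: the ``proof'' given there is a bare citation of \cite[Lemma 5.13]{CST} and \cite[Lemma 5.14]{CST}, with a one-sentence remark that the arguments there, though carried out in a different setting, extend to $\RU_{2r+1}$. Your attempt is therefore a reconstruction of Cogdell--Shahidi--Tsai's argument rather than of anything in this paper, and while you have identified the right ingredients (Bruhat stratification, partitions of unity, quasi-invariance from (\ref{eq2.1}), and root-killing for non-Bessel cells), the central step is left as a gap rather than closed.

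Concretely, in part (1) you write $f=f_{\mathrm{near}}+f_0$ and assert that ``the task reduces to showing $B_m(\cdot,f_{\mathrm{near}})\equiv 0$.'' That reduction is exactly the content of CST's germ-expansion lemma (their Lemma~5.12), and it is not automatic: the quasi-invariance identity you display is just (\ref{eq2.1}) and holds for any $f$, while the nontrivial claim is that once $B_m(aw,f)=0$ for $a\in A_w$, the contribution of $f_{\mathrm{near}}$ can be killed (or replaced by something supported off $C(w)$) \emph{uniformly in $g\in G$}, with the threshold on $m$ depending only on $f$ and not on $g$. You flag this yourself as ``the main obstacle'' in the final paragraph, so you are aware the argument is incomplete; but that obstacle \emph{is} the theorem. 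Two smaller misstatements compound the gap. First, the phrase ``$a'$ conjugating via $w$ into a subgroup of $N\cap H_m$'' does not make sense as written (a torus element cannot be conjugated into a unipotent subgroup); the correct mechanism is the root-killing of Lemma~\ref{lem2.2}: for $\alpha\in\theta_w$ with $\alpha(a')\ne 1$ one plays $\bx_\alpha(x)\in N_m$ against $\bx_{w\alpha}(\alpha(a')x)$ to force vanishing of $B_m(aw,\cdot)$ when $a\notin A_w$ up to congruence. Second, the parenthetical ``the case $w\notin B(G)$ is subsumed by (2)'' inverts the logical dependence: part (1) is stated for arbitrary $w\in\bW$ precisely because it is used as a building block in the proof of part (2) (peeling off a non-Bessel cell requires knowing one can replace $f$ by something supported off that cell), so invoking (2) to dispatch the non-Bessel case of (1) is circular. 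Your sketch of part (2) is closer to the mark, but again relies on part (1) (or a version of it) being available for $w'\notin B(G)$, which your own framing of part (1) does not supply.
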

\begin{proof}
Part (1) is \cite[Lemma 5.13]{CST} and part (2) is \cite[Lemma 5.14]{CST}. Note that although the results in \cite{CST} were proved in a different setting, their proof is in fact general enough to include our case.
\end{proof}

\begin{cor}\label{cor2.7}
Let $f,f_0\in C_c^\infty(G,\omega)$ with $W^f(I_{2r+1})=W^{f_0}(I_{2r+1})=1$. Then there exist functions $f_{\tilde w_i}\in C_c^\infty(\Omega_{\tilde w_i},\omega)$ for all $i$ with $1\le i\le r$ such that for sufficiently large $m$ (depending only on $f,f_0$) we have 
$$B_m(g,f)-B_m(g,f_0)=\sum_{i=1}^r B_m(g,f_{\tilde w_i}),\forall g\in G.$$
\end{cor}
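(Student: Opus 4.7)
The plan is to reduce the support of $f - f_0$ in two stages via Theorem \ref{thm2.6}, then split what remains by a partition of unity. First I would observe that $B_m(a, f - f_0) = 0$ for every $a \in A_1 = E^1$. Since both $f$ and $f_0$ lie in $C_c^\infty(G,\omega)$, the Whittaker functions $W^f$ and $W^{f_0}$ transform by $\omega$ under the central subgroup $E^1$, and since $E^1$ commutes with $N_m$, the same is true of $B_m(\cdot,f)$ and $B_m(\cdot,f_0)$. By Lemma \ref{lem2.1}(1) together with the normalization $W^f(I_{2r+1}) = W^{f_0}(I_{2r+1}) = 1$, both $B_m(I_{2r+1},f)$ and $B_m(I_{2r+1},f_0)$ equal $1$ for $m$ sufficiently large, so $B_m(a, f - f_0) = \omega(a)(1-1) = 0$.

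Next, I would apply Theorem \ref{thm2.6}(1) with $w = 1$ (so $\Omega_1 = G$) to $f - f_0 \in C_c^\infty(G,\omega)$. This produces $f_1 \in C_c^\infty(\Omega_1 - C(1),\omega)$ with $B_m(g, f - f_0) = B_m(g, f_1)$ for all $g$ and sufficiently large $m$. I would then further shrink the support to $U := \bigcup_{i=1}^r \Omega_{\tilde w_i}$ by invoking Theorem \ref{thm2.6}(2), again with $w = 1$, $\Omega_{w,1} = \Omega_1 - C(1)$ and $\Omega_{w,0} = U$; both sets are open and $N \times N$, $A$-invariant. The key combinatorial point is that $(\Omega_1 - C(1)) - U$ is a union of Bruhat cells $C(w')$ with $w' \notin B(G)$: if $w' \in B(G)$ and $w' > 1$, then iterating the fact (coming from Eq.~(\ref{eq2.3})) that $\tilde w_1, \dots, \tilde w_r$ are the Bessel-minimal elements strictly above $1$ in $B(G)$ produces some $i$ with $\tilde w_i \le w'$, whence $C(w') \subset \Omega_{\tilde w_i} \subset U$. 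Theorem \ref{thm2.6}(2) then delivers $f_1' \in C_c^\infty(U,\omega)$ having the same partial Bessel function as $f_1$ for $m$ large.

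Finally, the compact support of $f_1'$ is covered by the open sets $\Omega_{\tilde w_i}$, so I would decompose $f_1' = \sum_{i=1}^r f_{\tilde w_i}$ using a locally constant partition of unity subordinate to this cover; since $E^1$ is compact, the cutoffs can be averaged to be $E^1$-invariant, so that each $f_{\tilde w_i}$ lies in $C_c^\infty(\Omega_{\tilde w_i},\omega)$. Summing the partial Bessel functions yields
\[
B_m(g,f) - B_m(g,f_0) \;=\; B_m(g,f_1) \;=\; B_m(g,f_1') \;=\; \sum_{i=1}^r B_m(g, f_{\tilde w_i})
\]
for all $g$ and all sufficiently large $m$. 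I expect the main obstacle to be the Bruhat-order bookkeeping in the second reduction --- explicitly identifying every Bruhat cell outside $\bigcup_i \Omega_{\tilde w_i}$ as indexed by a non-Bessel Weyl element --- while the rest is a direct application of the Cogdell-Shahidi-Tsai machinery packaged in Theorem \ref{thm2.6} together with a standard $p$-adic partition of unity.
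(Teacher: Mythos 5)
Your argument is correct and follows the same route as the paper: verify that the partial Bessel function of $f-f_0$ vanishes on the relevant torus, shrink the support off $C(1)$ via Theorem~\ref{thm2.6}(1), shrink further to $\bigcup_i \Omega_{\tilde w_i}$ via Theorem~\ref{thm2.6}(2), and split by a partition of unity. The only cosmetic difference is that you check the hypothesis of Theorem~\ref{thm2.6}(1) only on $A_1=E^1$ (which is exactly what the theorem requires), whereas the paper records the stronger fact --- via Corollary~\ref{cor2.3} and Eq.~(\ref{eq2.1}) --- that $B_m(b,f)-B_m(b,f_0)=0$ for every $b$ in the Borel subgroup.
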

This is the analogue of \cite[Proposition 5.3]{CST}. The proof is quite similar to the proof of \cite[Proposition 5.3]{CST} and is almost identical with the proof of \cite[Corollary 2.7]{Zh}. Because of its importance, we repeat the argument in the following.
\begin{proof}
For $m$ large (depending only on $f,f_0$), we have $B_m(t,f)=B_m(t,f_0)$ by Corollary \ref{cor2.3}. By Eq.(\ref{eq2.1}), we have $B_m(b,f)-B_m(b,f_0)=0$ for all $b$ in the Borel subgroup $B$. Thus by Theorem \ref{thm2.6}(1), there exists a function $f_1\in C_c^\infty(G-B,\omega)$ such that $B_m(g,f)-B_m(g,f_0)=B_m(g,f_1)$ for all $g\in G$ and $m$ large. Denote 
$$\Omega_1'=\bigcup_{w\in B(G),w\ne 1}\Omega_w=\bigcup_{w\in B(G), d_B(w,1)=1}\Omega_w=\bigcup_{1\le i\le r}\Omega_{\tilde w_i}.$$
See Eq.(\ref{eq2.3}) for the last equality. By Theorem \ref{thm2.6}(2), there exists a function $f_1'\in C_c^\infty(\Omega_1',\omega)$ such that 
$$B_m(g,f)-B_m(g,f_0)=B_m(g,f_1)=B_m(g,f_1').$$
Using a partition of unity argument as in \cite{J} and \cite[Proposition 5.3]{CST}, there exist functions $f_{\tilde w_i}\in C_c^\infty(\Omega_{\tilde w_i}, \omega)$ such that 
$$f_1'=\sum_i f_{\tilde w_i}.$$
Then we have $$B_m(g,f)-B_m(g,f_0)=\sum_i B_m(g,f_{\tilde w_i}),$$
for all $g\in G$ and sufficiently large $m$ (depending only on $f$ and $f_0$).
\end{proof}

\subsection{The local converse theorem} We now state the main theorem of this paper.
\begin{thm}\label{thm2.8}
Let $\pi,\pi_0$ be two irreducible generic supercuspidal representations of $\RU_{2r+1}$ with the same central character $\omega$. If $\gamma(s,\pi\times \tau,\psi)=\gamma(s,\pi_0\times \tau,\psi)$ for all irreducible generic representations $\tau$ of $G_n=\GL_n(E)$ and for all $n$ with $1\le n\le r$, then $\pi\cong \pi_0$.
\end{thm}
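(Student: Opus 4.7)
The plan is to prove the equality of Whittaker functions via a comparison of partial Bessel functions built from matrix coefficients, following the strategy of \cite{Zh}. I first choose matrix coefficients $f\in\CM(\pi)$ and $f_0\in\CM(\pi_0)$ with $W^f(I_{2r+1})=W^{f_0}(I_{2r+1})=1$ (which exist by supercuspidality and genericity) and form the partial Bessel functions $B_m(\cdot,f)$ and $B_m(\cdot,f_0)$ for large $m$ via (2.2). Since $\pi$ and $\pi_0$ share the central character $\omega$, Corollary \ref{cor2.3} and the transformation rule (2.1) force $B_m(\cdot,f)=B_m(\cdot,f_0)$ on the Borel subgroup $B$. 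Applying Corollary \ref{cor2.7} (the Cogdell--Shahidi--Tsai packaging), together with the identification (2.3) of the Bessel-distance-one elements from $1$ as $\{\tilde w_n:1\le n\le r\}$, yields the decomposition
$$B_m(g,f)-B_m(g,f_0)=\sum_{n=1}^{r} B_m(g,f_{\tilde w_n}), \qquad f_{\tilde w_n}\in C_c^\infty(\Omega_{\tilde w_n},\omega),$$
valid on $G$ for all sufficiently large $m$.

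The core of the argument is then to show that each summand vanishes. I will substitute $B_m(\cdot,f)$ and $B_m(\cdot,f_0)$ into the zeta integral $\Psi(\cdot,\xi_s)$ of Section 1 with $\tau$ an irreducible generic representation of $\GL_n(E)$, and invoke the functional equation of Proposition \ref{prop1.1} together with the hypothesis $\gamma(s,\pi\times\tau,\psi)=\gamma(s,\pi_0\times\tau,\psi)$ to obtain
$$\sum_{n'=1}^{r}\Psi\bigl(B_m(\cdot,f_{\tilde w_{n'}}),M(s,\tau)\xi_s\bigr)=\gamma(s,\pi\times\tau,\psi)\sum_{n'=1}^{r}\Psi\bigl(B_m(\cdot,f_{\tilde w_{n'}}),\xi_s\bigr).$$
The key geometric input is Lemma \ref{lem2.5}: for $\tau$ on $\GL_n(E)$, the integral $\Psi(W,\xi_s)$ sees $W$ only on the closed Bruhat interval $[\tilde w_n,w_\ell^{L_n}\tilde w_n]\subset P_n\tilde w_n P_n$, so only the $n$-th summand should contribute in an essential way. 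I then argue inductively, beginning at $n=r$ and descending to $n=1$: at each stage I choose $\tau$ to be sufficiently ramified (e.g., supercuspidal of high conductor) so as to localize the zeta integral to the open $\tilde w_n$-cell and isolate $B_m(\cdot,f_{\tilde w_n})$ from the other summands and from boundary Bruhat cells not supporting Bessel functions. Using Howe-vector invariance properties one deduces $B_m(\cdot,f_{\tilde w_n})\equiv 0$, allowing the induction to proceed.

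Once every summand vanishes, $B_m(\cdot,f)=B_m(\cdot,f_0)$ on all of $G$ for $m$ large; since these are nonzero common elements of $\CW(\pi,\psi_N)$ and $\CW(\pi_0,\psi_N)$ and each Whittaker model is generated under right translation by any nonzero element, we conclude $\pi\cong\pi_0$. The principal obstacle is the isolation step in the induction: translating the equality of zeta-integral pairings (finitely many Laurent coefficients in $q_E^{-s}$) into the pointwise vanishing of each $f_{\tilde w_n}$ on its cell. This demands that the family $\{\xi_s\}$ range over data rich enough to separate partial Bessel functions supported on distinct Bruhat cells of $P_n\tilde w_n P_n$, which in turn requires a careful asymptotic analysis of $\Psi(B_m(\cdot,f_{\tilde w_n}),\xi_s)$ as $m$ and the conductor of $\tau$ grow in tandem. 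This is the technical heart of the argument, and is the analogue in the $\RU_{2r+1}$ setting of the key technical computation carried out for $\Sp_{2r}$ and $\RU(r,r)$ in \cite{Zh}.
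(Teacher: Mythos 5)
Your overall architecture (matrix coefficients $f,f_0$, partial Bessel functions, the Cogdell--Shahidi--Tsai decomposition of Corollary~\ref{cor2.7}, and the endgame via uniqueness of Whittaker models) matches the paper's, but the proposed engine of the induction has two genuine defects.

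First, the direction of the induction is backwards. You propose to descend from $n=r$ to $n=1$, killing $f_{\tilde w_n}$ at stage $n$. But the geometric fact driving the isolation is one-sided: as shown in the proof of Lemma~\ref{lem4.2}(2), one has $\theta_{w_{\max}}=\{\alpha_i:\,n+1\le i\le r\}$ and $\theta_{\tilde w_i}=\Delta\setminus\{\alpha_i\}$, so $P_n\tilde w_nP_n\cap\Omega_{\tilde w_i}=\emptyset$ \emph{only for} $i\ge n+1$. For $i<n$ one has $\theta_{w_{\max}}\subset\theta_{\tilde w_i}$, hence $C(w_{\max})\subset\Omega_{\tilde w_i}$ and $P_n\tilde w_nP_n$ meets $\Omega_{\tilde w_i}$ nontrivially. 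Starting at $n=r$, the cell $P_r\tilde w_rP_r$ meets every $\Omega_{\tilde w_i}$, $i<r$, and there is no way to separate $f_{\tilde w_r}$ from the other summands. The induction must ascend: use $\GL_n$-twists to kill $f_{\tilde w_n}$ only after $f_{\tilde w_1},\dots,f_{\tilde w_{n-1}}$ have already been removed, so that on $P_n\tilde w_nP_n$ the difference reduces to $B_m(\cdot,f_{\tilde w_n})$ alone. (This is precisely the content of Theorem~\ref{thm4.1} and its Inductive Hypothesis, and of the final stage of the proof where $f_{\tilde w_n}$ is pushed, via Theorem~\ref{thm2.6}, onto $\bigcup_{i\ge n+1}\Omega_{\tilde w_i}$ and absorbed into the remaining $f'_{\tilde w_i}$.)

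Second, ``choose $\tau$ to be sufficiently ramified'' is the wrong isolation mechanism for a local converse theorem. The paper does not localize the zeta integral by ramifying $\tau$; it localizes by choosing the \emph{sections} $\xi_s^{i,v}\in V(s,\tau,\psi^{-1})$ from \S3.1 to be supported on $V_{2n}M_{2n}\ov V_{2n,i}$, so that $M(s,\tau)\xi_s^{i,v}$ is concentrated near $M_{2n}w_nV_{2n,m}$ (Eq.~(\ref{eq3.1}) and (\ref{eq3.2})). After this, the crucial deduction $B_m(\bt_n(a)\tilde w_n,f_{\tilde w_n})=0$ comes from the Jacquet--Shalika completeness argument, Proposition~\ref{prop3.3}, which needs the full family of irreducible generic $\tau$ of $\GL_n(E)$ and all of their Whittaker vectors. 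Highly ramified $\tau$ alone does not give pointwise vanishing of a partial Bessel function --- that line of reasoning yields stability of gamma factors (as in \S5), not a converse theorem. Finally, your phrasing ``$\Psi(W,\xi_s)$ sees $W$ only on $[\tilde w_n,w_\ell^{L_n}\tilde w_n]$'' conflates the support gained from the choice of section (the $\RU_{2n}$-side localization to $V_{2n}M_{2n}\ov V_{2n}$ or $V_{2n}M_{2n}w_nV_{2n}$) with the Bruhat geometry on $\RU_{2r+1}$; Lemma~\ref{lem2.5} enters only after the section is chosen and the integration variable is transported into $\RU_{2r+1}$ via the embedding $j$.
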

\noindent \textbf{Remarks:} 1, The gamma factors we used are the gamma factors defined by the local functional equation in Proposition \ref{prop1.1}. It is expected that these gamma factors are the same as the local gamma factors defined by Langlands-Shahidi method. In particular, the local gamma factors should satisfies multiplicativity properties. If this is true, one only need to use the local gamma factors of $\RU_{2r+1}$ twist by irreducible supercuspidal representations of $\GL_n(E)$ for $1\le n\le r$.\\
2, Note that there is only a single orbit of generic characters of $N$ under the torus $A$ action. Thus if $\pi$ is generic with respect to a given nontrivial additive character $\psi$ of $E$, then it is generic with respect to any nontrivial additive character of $E$. Thus we can assume the additive character $\psi$ is unramified.\\

In the rest of this subsection, we outline our proof of the above theorem. We fix two irreducible generic supercuspidal representations $\pi,\pi_0$ of $\RU_{2r+1}$. We denote by $\CC(0)$ the condition that $\pi$ and $\pi_0$ have the same central character (say $\omega$). For a positive integer $n$ with $1\le n\le r$, we define the condition $\CC(n)$ for $\pi,\pi_0$ inductively:
  \begin{align*} & \textrm{ the condition } \CC(n-1) \textrm{ and}\\
 &\gamma(s,\pi\times \tau, \psi)=\gamma(s,\pi_0\times \tau,\psi),\\
  & \textrm{ for all irreducible generic representations } \tau \textrm{ of } \GL_n(E).
  \end{align*}

Since the representation $\pi$ is supercuspidal, we have $\CM(\pi)\subset C_c^\infty(G,\omega)$, where $\omega$ is the central character of $\pi$ as usual. We can consider the linear functional $\CM(\pi)\ra C^\infty(G,\psi_U,\omega), f\mapsto W^f$ defined by 
$$W^f(g)=\int_N\psi_N^{-1}(u)f(ug)du.$$
Since $\pi$ is $\psi_N$-generic, the above linear functional is nonzero. Thus we can take $f\in \CM(\pi)$ such that $W^f(I_{2r+1})=1$. Note that for $f\in \CM(\pi)$, we actually have $W^f\in \CW(\pi,\psi_N)$. Recall that we have defined the partial Bessel function $B_m(g,f):$
$$B_m(g,f)=\frac{1}{\vol(N_m)}\int_{N_m}W^f(gu')\psi_m^{-1}(u')du'.$$
The partial Bessel function $B_m(~,f)$ is also in the Whittaker model $\CW(\pi,\psi_N)$.\\

Fix $f\in \CM(\pi), f_0\in \CM(\pi)$ such that $W^f(I_{2r+1})=W^{f_0}(I_{2r+1})=1$. The condition $\CC(0)$ implies that $f,f_0\in C_c^\infty(G,\omega)$. By Corollary \ref{cor2.7}, we have the expansion
$$B_m(g,f)-B_m(g,f_0)=\sum_{i=1}^r B_m(g,f_{\tilde w_i}), $$
for sufficiently large $m$. 

In $\S$4, by induction, we will show that the condition $\CC(n)$ implies that 
$$B_m(g,f)-B_m(g,f_0)=\sum_{i=n+1}^r B_m(g,f'_{\tilde w_i}),$$
for some $f_{\tilde w_i}\in C_c^\infty(\Omega_{\tilde w_i},\omega)$. Thus the condition $\CC(r)$ implies that $B_m(g,f)=B_m(g,f_0)$ for all $g\in G$ and sufficiently large $m$. This implies that $\pi\cong \pi_0$ by the uniqueness of Whittaker models and the irreducibility of $\pi,\pi_0$.

\section{Preparations of the proof}
We fix an unramified additive character $\psi$ of $E$.
\subsection{Sections of induced representations}
Let $n$ be a positive integer with $1\le n\le r$. Denote by $\ov{V}_{2n}$ the opposite of $V_{2n}$, i.e., $$\ov{V}_{2n}=\wpair{\bar u(x):=\begin{pmatrix}I_n&\\ x&I_n \end{pmatrix}\in \RU_{2n}}.$$

Let $i$ be positive integer and $H_i$ be the subgroup of $\RU_{2r+1}$ defined in $\S$2.1. Denote
$$\ov{V}_{2n,i}=\wpair{\bar u (x)\in \ov{V}_{2n}| \begin{pmatrix}I_n&&\\ &I_{2r-2n+1}&\\ x&&I_n \end{pmatrix}\in H_i}.$$

Then $\ov{V}_{2n,i}$ is an open compact subgroup of $\ov{V}_{2n}$.

Let $(\tau,V_\tau)$ be an irreducible $\psi_{Z_n}^{-1}$-generic representation of $G_n$. For $v\in V_\tau,$ we consider the $V_\tau$-valued function $f_s^{i,v}$ on $\RU_{2n} $ defined by 
$$f_s^{i,v}(g)=\left\{\begin{array}{lll}|\det(a)|_E^{s+\frac{n-1}{2}} \tau(a)v, & \textrm{if }g=u(x)m(a)\bar u, u(x)\in V_{2n}, a\in G_n, \bar u\in \ov{V}_{2n,i},\\
0, & \textrm{otherwise.} \end{array}\right.$$
\begin{lem}
For any $v\in V_\tau$, there exists an integer $i_0(v)$ such that if $i\ge i_0(v)$, the function $f_s^{i,v}$ defines an element in $I(s,\tau)$.
\end{lem}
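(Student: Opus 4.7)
The plan is to verify that $f_s^{i,v}$ satisfies the three defining properties of a section of $I(s,\tau)$: well-definedness of the prescribed formula, the covariance under the Siegel parabolic $Q_{2n}=M_n\ltimes V_{2n}$, and smoothness. The first two are essentially formal; smoothness is the main content, and it is there that the hypothesis ``$i$ sufficiently large'' enters.

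For well-definedness and covariance, I would note that the multiplication map $V_{2n}\times M_n\times \bar V_{2n}\to \RU_{2n}$ is a homeomorphism onto the open Bruhat cell, so the factorization $g=u(x)m(a)\bar u$ is unique when it exists, making $f_s^{i,v}(g)$ unambiguous. Since $M_n$ normalizes $V_{2n}$, for $g=u(x)m(a)\bar u$ in the support and $u(x')m(a')\in Q_{2n}$ one can rewrite $u(x')m(a')g=u(y)m(a'a)\bar u$ for some $y\in \Mat_{n\times n}(E)$, from which the desired covariance $f_s^{i,v}(u(x')m(a')g)=|\det(a')|_E^{s+(n-1)/2}\tau(a')f_s^{i,v}(g)$ follows at once.

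For smoothness, I would first observe that the support $V_{2n}M_n\bar V_{2n,i}$ is clopen in $\RU_{2n}$. Openness is clear since $\bar V_{2n,i}$ is open in $\bar V_{2n}$ and the open Bruhat cell is open in $\RU_{2n}$. Closedness uses compactness of $\bar V_{2n,i}$: if $p_n\bar u_n\to g$ with $p_n\in V_{2n}M_n$ and $\bar u_n\in \bar V_{2n,i}$, then passing to a subsequence so that $\bar u_n\to \bar u\in \bar V_{2n,i}$ we get $p_n\to g\bar u^{-1}$, which lies in the closed subgroup $V_{2n}M_n$, whence $g\in V_{2n}M_n\bar V_{2n,i}$. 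Consequently $f_s^{i,v}$ vanishes identically near every point outside its support, and we are reduced to finding an open compact subgroup $K'\subset \RU_{2n}$ that fixes $f_s^{i,v}$ on the right at points of the support. Choose an open compact subgroup $K_\tau\subset G_n$ fixing $v$ and lying inside $\{a\in G_n:|\det(a)|_E=1\}$. Using the Iwahori factorization of a sufficiently deep congruence subgroup of $\RU_{2n}$, one can pick $K'$ so that $K'\cap M_n\subset K_\tau$, $K'\cap \bar V_{2n}\subset \bar V_{2n,i}$, and so that for every $\bar u\in \bar V_{2n,i}$ and every $k_V\in K'\cap V_{2n}$ the local Bruhat decomposition $\bar u k_V=u(x'')m(a'')\bar u'''$ has $a''\in K_\tau$ and $\bar u'''\in \bar V_{2n,i}$. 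A routine calculation, using that $M_n$ normalizes $V_{2n}$, then produces a factorization $gk=u(\tilde x)m(aa''')\bar u^{(5)}$ for $g=u(x)m(a)\bar u$ in the support and $k\in K'$, with $a'''\in K_\tau$ and $\bar u^{(5)}\in \bar V_{2n,i}$; since $\tau(a''')v=v$ and $|\det(a''')|_E=1$, we obtain $f_s^{i,v}(gk)=f_s^{i,v}(g)$.

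The main technical obstacle is the uniform control, over the compact set $\bar V_{2n,i}$, of the Bruhat coordinates of $\bar u k_V$: ensuring that $a''$ lands in the fixed $K_\tau$ and $\bar u'''$ in $\bar V_{2n,i}$ simultaneously for all $\bar u\in \bar V_{2n,i}$ forces $\bar V_{2n,i}$ to be small, i.e., $i$ to be large. This is precisely the step that determines $i_0(v)$ in terms of the open compact subgroup $K_\tau$ fixing the smooth vector $v$.
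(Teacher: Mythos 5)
The paper does not spell out a proof here; it simply refers to \cite[Lemma 3.2]{Zh} and says the argument for $\RU_{2n}$ is similar and easier. Your reconstruction — unique Bruhat coordinates for well-definedness, normalization of $V_{2n}$ by $M_n$ for the covariance, clopenness of the support $V_{2n}M_n\ov{V}_{2n,i}$ from compactness of $\ov{V}_{2n,i}$, and then a deep Iwahori-factored $K'$ with uniform control of the Bruhat coordinates of $\bar u k_V$ over the compact set $\ov{V}_{2n,i}$ — is the standard argument for such sections, is correct, and is almost certainly the same route that \cite{Zh} takes.

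One remark on your final paragraph: the assertion that the uniformity step ``forces $\ov{V}_{2n,i}$ to be small, i.e., $i$ to be large'' is not quite accurate, and it is worth being careful about it. For any fixed $i\ge 1$ the set $\ov{V}_{2n,i}$ is open and compact and sits inside the open cell; since the Bruhat coordinate map is continuous (rational) and the identity has $M$-coordinate $I_n$, compactness lets you shrink $K'$ (depending on $i$ and $K_\tau$) so that $a''\in K_\tau$ and $\bar u'''\in\ov{V}_{2n,i}$ for all $\bar u\in\ov{V}_{2n,i}$ and $k_V\in K'\cap V_{2n}$. In other words, your argument in fact proves $f_s^{i,v}\in I(s,\tau)$ for every $i\ge 1$, so $i_0(v)$ may be taken to be $1$; the threshold $i_0(v)$ is not dictated by the smoothness proof. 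Since the lemma is only an existence statement, this is a misattribution rather than a gap, but the causal claim should be dropped. Also, strictly for completeness, you should record that $K'\cap M_n$ must be taken small enough to normalize $\ov{V}_{2n,i}$ so that the middle Iwahori factor can be pushed past the $\ov{V}$-part; this is again routine by compactness, but it is a step your ``routine calculation'' silently uses.
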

The proof is similar, and in fact easier than the proof in the $\Sp_{2n}$ case, which is given in \cite[Lemma 3.2]{Zh}.

 As in $\S$1.1, we fix a nonzero $\psi_{Z_n}^{-1}$-Whittaker functional $\lambda$ of $\tau$ and consider the $\BC$-valued function $\xi_s^{i,v}(g,a)=\lambda(\tau(a)f_s^{i,v}(g))$ on $\RU_{2n}\times G_n$. Then $\xi_s^{i,v}\in V(s,\tau,\psi^{-1})$. From the definition of $f_s^{i,v}$, we get 
\begin{equation}\label{eq3.1}\xi_s^{i,v}(g,I_n)=\left\{\begin{array}{lll}|\det(a)|_E^{s+\frac{n-1}{2}} W_v(a), & \textrm{if }g=u(x)m(a)\bar u, u(x)\in V_{2n}, a\in G_n, \bar u\in \ov{V}_{2n,i},\\
0, & \textrm{otherwise.} \end{array}\right.\end{equation}
Here $W_v(a)=\lambda(\tau(a)v)$ is the Whittaker function of $\tau$ associated with $v$.

We then consider $\tilde \xi_{1-s}^{i,v}=M(s,\tau)\xi_s^{i,v}\in V(1-s,\tau^*,\psi^{-1})$.

\begin{lem}
Let $D$ be an open compact subset of $V_{2n}$. Then there exists an integer $I(D,v)\ge i_0(v)$ such that for all $i\ge I(D,v)$, we have $\tilde\xi_{1-s}^{i,v}(w_n x)=\vol(\ov{V}_{2n,i})v $ for all $x\in D$.
\end{lem}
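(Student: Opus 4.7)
The plan is to compute $M(s,\tau)f_s^{i,v}$ directly at the point $w_n x$ from the defining integral and show that, for $i$ sufficiently large, only a small neighborhood of the identity in $\ov V_{2n}$ contributes, yielding the advertised volume factor.

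First I would unfold. After the bijective change of variable $\bar u := w_n^{-1} u w_n$ from $V_{2n}$ onto $\ov V_{2n}$ (normalizing Haar measures so the Jacobian is $1$), one gets
\[
M(s,\tau)f_s^{i,v}(w_n x) \;=\; \int_{\ov V_{2n}} f_s^{i,v}\bigl(\bar u(y)\,u(z)\bigr)\,dy,
\]
where $x = u(z)$ with $z$ in a compact subset of $\Mat_{n\times n}(E)$ determined by $D$, and $\bar u = \bar u(y)$. A straightforward block computation in $\RU_{2n}$ gives the Bruhat decomposition
\[
\bar u(y)\,u(z) \;=\; u\!\bigl(z(I_n+yz)^{-1}\bigr)\; m(a)\; \bar u\!\bigl((I_n+yz)^{-1}y\bigr),\qquad a^* = I_n + yz,\ \ a = I_n - z\,y',
\]
valid on the open set where $I_n + yz$ is invertible, with $y' := (I_n+yz)^{-1}y$.

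Next, by the support condition (\ref{eq3.1}) on $f_s^{i,v}$, only those $y$ with $\bar u(y') \in \ov V_{2n,i}$ contribute, and on this set the integrand equals $|\det a|_E^{s+(n-1)/2}\,\tau(a)v$. Now comes the key localization: since $D$ is compact and $v$ is a smooth vector, I can choose $i\ge i_0(v)$ large enough so that, uniformly in $z$ and in all admissible $y'$ with $\bar u(y')\in \ov V_{2n,i}$: (i) $|\det a|_E = 1$; (ii) $a$ lies in an open compact subgroup of $G_n$ stabilizing $v$, so $\tau(a)v = v$; and (iii) the change of variable $y \mapsto y'$ is a measure-preserving bijection from $\{y : \bar u(y')\in \ov V_{2n,i}\}$ onto $\ov V_{2n,i}$. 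Point (iii) rests on the fact that the linearization of $y\mapsto y'$ at $y=0$ is the identity and that $|\cdot|_E$ is locally constant at $1$, so the Jacobian is identically $1$ on a sufficiently small neighborhood. The integral then collapses to $\int_{\ov V_{2n,i}} v\,dy' = \vol(\ov V_{2n,i})\,v$, which yields the claim after applying $\lambda\circ \tau(\cdot)$.

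The main obstacle is making the localization in (i)--(iii) uniform over all $x\in D$ via a single integer $i$. This is handled by the compactness of $D$, which bounds the relevant parameters $z$, together with the smoothness of $v$; both reduce the uniformity to finitely many open compact conditions, each of which is satisfied once $\ov V_{2n,i}$ has been shrunk enough. A secondary subtlety is the Jacobian of $y \mapsto y'$, which is tidy in the non-archimedean setting precisely because $|\cdot|_E$ is locally constant near $1$ and the derivative of the transformation at the origin is the identity matrix.
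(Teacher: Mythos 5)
Your proof is correct and follows the standard route the paper defers to via the $\Sp_{2n}$ analogue \cite[Lemma 3.3]{Zh}: unfold the intertwining integral over $\ov{V}_{2n}$, Bruhat-factorize $\bar u(y)u(z)=u(c)m(a)\bar u(y')$ on the open cell, and use compactness of $D$, smoothness of $v$, and local constancy of $|\cdot|_E$ near $1$ to force $|\det a|_E=1$, $\tau(a)v=v$, and unit Jacobian for $y\mapsto y'$ once $\ov{V}_{2n,i}$ is small enough. One small remark: the lemma is (by a mild abuse of notation) really about the $V_\tau$-valued section $M(s,\tau)f_s^{i,v}$, so your computation already gives the stated identity directly and the closing step of applying $\lambda\circ\tau(\cdot)$ is unnecessary.
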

The proof is the same as in the $\Sp_{2n}$-case, see \cite[Lemma 3.3]{Zh}. 

Since $\tilde \xi_{1-s}^{i,v}\in V(1-s,\tau^*,\psi^{-1})$, we then have 

\begin{equation}\label{eq3.2}
\tilde \xi_{1-s}^{i,v}(m(a)w_n x,I_n)=\vol(\ov{V}_{2n,i})|\det(a)|_E^{1-s+\frac{n-1}{2}}W_v^*(a),
\end{equation}
for all $x\in D,$ and $i\ge I(D,v)$. Here $W^*_v$ is the Whittaker function of the representation $\tau^*$ associated with $v$.

\subsection{A result of Jacquet-Shalika}
\begin{prop}\label{prop3.3}
Let $W'$ be a smooth function on $\GL_n(E)$ which satisfies $W'(ug)=\psi_{Z_n}(u)W'(g)$ for all $u\in Z_n,g\in \GL_n(E)$, and for each integer $k$, the set $\wpair{g\in \GL_n(E)| W'(g)\ne 0, |\det(g)|=q_E^k}$ is compact modulo $Z_n$. Assume, for all irreducible generic representation $\tau$ of $\GL_n(E)$ and all Whittaker functions $W\in \CW(\tau,\psi_{Z_n}^{-1})$, the integral 
$$\int_{Z_n\setminus G_n}W'(g)W(g)|\det(g)|^{-s-k}$$
vanishes for $\Re(s)\ll 0$, where $k$ is a fixed number. Then $W'=0$.
\end{prop}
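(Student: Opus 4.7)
The plan is to combine a Mellin-type inversion in the determinant variable with the Kirillov/Jacquet--Shalika density theorem for Whittaker functions. First I would use that $\tau \otimes |\det|^{\lambda}$ is again an irreducible generic representation of $\GL_n(E)$, whose Whittaker model is $\wpair{W\cdot|\det|^{\lambda} : W \in \CW(\tau,\psi_{Z_n}^{-1})}$. Applying the hypothesis to this twisted family yields, for every $\lambda \in \BC$,
$$\int_{Z_n \setminus \GL_n(E)} W'(g)\, W(g)\, |\det g|^{\lambda - s - k}\, dg = 0$$
for $\Re(s)\ll 0$. Setting $\mu = s + k - \lambda$, the above is a rational function in $q_E^{-\mu}$ by the standard local theory, so vanishing on a half-plane forces identical vanishing. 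Collecting coefficients in the Laurent expansion
$$\int_{Z_n\setminus\GL_n(E)} W'(g)W(g)|\det g|^{-\mu}dg = \sum_{j\in\BZ} q_E^{-j\mu} \int_{(Z_n\setminus\GL_n(E))^{(j)}} W'(g)W(g)\,dg,$$
where $(\,\cdot\,)^{(j)}$ denotes the slice $\wpair{g : |\det g| = q_E^j}$, one obtains, for every $j \in \BZ$, every irreducible generic $\tau$, and every $W \in \CW(\tau,\psi_{Z_n}^{-1})$,
$$\int_{(Z_n\setminus\GL_n(E))^{(j)}} W'(g)\, W(g)\, dg = 0.$$

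Next I would invoke Kirillov's theorem: letting $\RP_n := \wpair{g \in \GL_n(E) : e_n g = e_n}$ be the mirabolic subgroup of $\GL_n(E)$, the restriction map $W\mapsto W|_{\RP_n}$, as $(\tau,W)$ ranges over all irreducible generic representations and their Whittaker vectors, exhausts the Schwartz--Bruhat space of $(Z_n,\psi_{Z_n})$-equivariant, locally constant, compactly supported functions on $Z_n\setminus\RP_n$. Combining this with the Iwasawa decomposition $\GL_n(E) = \RP_n\cdot K$ for $K = \GL_n(\fo_E)$ and the right $K$-stability of Whittaker models, varying $W$ by $K$-translation decouples the integrals over $\RP_n$ and $K$; the smoothness of $W'$ (which is right invariant under some open compact $K'\subset K$) reduces the problem on $K$ to finitely many cosets. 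Testing $W'(\,\cdot\, k)$ against arbitrary Schwartz--Bruhat functions on $Z_n\setminus\RP_n^{(j)}$ then forces $W'(pk)=0$ for all $p\in\RP_n$, $k\in K$, and $j\in\BZ$. Since $\GL_n(E) = \RP_n\cdot K$, this gives $W' \equiv 0$.

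The hard part will be the decoupling step: one must verify that right $K$-translates of Whittaker functions are rich enough to separate the $\RP_n$- and $K$-variables, so that Kirillov's density theorem recovers $W'(\,\cdot\,k)$ pointwise on each fiber. The compact-support hypothesis on $W'$ on each determinant shell is essential here, as it ensures that pairings against compactly supported Schwartz--Bruhat test functions on $Z_n\setminus\RP_n^{(j)}$ actually detect the pointwise values of $W'$.
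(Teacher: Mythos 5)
The paper does not actually prove Proposition~3.3; it simply cites \cite[Lemma 3.2]{JS} and \cite[Corollary 2.1]{Chen}. Your reconstruction has the right high-level shape --- extract slice-by-slice vanishing from the Laurent expansion in $q_E^{-s}$, then appeal to the density of restrictions of Whittaker functions to the mirabolic --- but the final ``decoupling'' step contains a concrete error. The claimed Iwasawa decomposition $\GL_n(E)=\RP_n\cdot K$ is false: for example $\diag(1,\dots,1,\varpi_E)$ cannot be written as $pk$ with $p\in\RP_n$ and $k\in\GL_n(\fo_E)$, since the last row of $pk$ would lie in $\fo_E^{\,n}$ while $\det(pk)$ would have to be a unit. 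The correct statement is $\GL_n(E)=\RP_n\cdot Z\cdot K$ with $Z$ the center (equivalently, $\GL_n(E)=QK$ for the maximal parabolic $Q=\RP_n Z$), and your argument never treats the central direction at all. Handling $Z$ requires exploiting that the central character $\omega_\tau$ ranges over all characters of $E^\times$ as $\tau$ varies, and carrying out a Mellin inversion in the central variable; this is a missing ingredient, not a routine detail.

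There is a second difficulty even once the center is restored. For an irreducible generic $\tau$, the restriction map $W\mapsto W|_{\RP_n}$ on $\CW(\tau,\psi_{Z_n}^{-1})$ is \emph{injective} --- a Whittaker function is completely determined by its values on the mirabolic. Hence one cannot choose $W$ so that $W(\cdot\,k_0)|_{\RP_n}$ equals a prescribed Schwartz--Bruhat test function while $W(\cdot\,k_i)|_{\RP_n}\equiv 0$ on the other cosets $k_i$ of $K/K'$: the functions $(\tau(k_i)W)|_{\RP_n}$ are all rigidly tied to $W|_{\RP_n}$ through $\tau$. So the asserted decoupling of the $\RP_n$- and $K$-integrals does not follow from Kirillov density alone, and you flag this yourself as ``the hard part'' without resolving it. A complete argument must explain how the $K$-direction is separated; this is precisely what the cited references supply, and it is not captured by the sketch as written.
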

This is a consequence of \cite[Lemma 3.2]{JS}. See \cite[Corollary 2.1]{Chen} for a proof of the current form of the above proposition.

\section{Proof of the local converse theorem}
Let $\psi$ be an unramified additive character of $E$ and let $\pi,\pi_0$ be two irreducible supercuspidal $\psi_N$-generic representations of $\RU_{2r+1}$ with the same central character $\omega$. Fix $f\in \CM(\pi),f_0\in \CM(\pi_0)$ such that $W^f(I_{2r+1})=W^{f_0}(I_{2r+1})=1$.

The main result of this section is the following
 
\begin{thm}\label{thm4.1}
Given an integer $n$ with $0\le n\le r$. Then condition $\CC(n)$ implies that there exist functions $f_{\tilde w_i}\in C_c^\infty(\Omega_{\tilde w_i},\omega)$, $n+1\le i\le r$ such that 
$$B_m(g,f)-B_m(g,f_0)=\sum_{i=n+1}^r B_m(g,f_{\tilde w_i}),$$
for all $g\in G$ and all sufficiently large $m$ depending only on $f,f_0$.
\end{thm}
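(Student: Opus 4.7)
The proof proceeds by induction on $n$. The base case $n = 0$ is exactly Corollary \ref{cor2.7}, which only uses condition $\CC(0)$ (that $\pi$ and $\pi_0$ share the same central character $\omega$). For the inductive step, I would assume the conclusion holds for $n-1$, so that $\CC(n-1)$ produces
$$B_m(g,f) - B_m(g,f_0) = \sum_{i=n}^{r} B_m(g, f_{\tilde w_i}), \qquad f_{\tilde w_i} \in C_c^\infty(\Omega_{\tilde w_i}, \omega),$$
for sufficiently large $m$, and use the extra information in $\CC(n)$ (equality of gamma factors for all generic $\tau$ of $\GL_n(E)$) to absorb the $i = n$ summand into contributions supported on $\Omega_{\tilde w_j}$ with $j > n$.

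The main tool is the local functional equation of Proposition \ref{prop1.1}. Since $B_m(\cdot, f) \in \CW(\pi, \psi_N)$ and $B_m(\cdot, f_0) \in \CW(\pi_0, \psi_N)$, the hypothesis $\CC(n)$ forces
$$\sum_{i=n}^{r} \Psi\bigl(B_m(\cdot, f_{\tilde w_i}), M(s,\tau)\xi_s\bigr) = \gamma(s,\pi\times\tau,\psi) \sum_{i=n}^{r} \Psi\bigl(B_m(\cdot, f_{\tilde w_i}), \xi_s\bigr)$$
for every generic $\tau$ on $\GL_n(E)$ and every section $\xi_s$. I would specialize $\xi_s$ to the explicit section $\xi_s^{i_0,v}$ of Section 3, whose Whittaker transform $\tilde\xi_{1-s}^{i_0,v}$ is described on a prescribed open set by (\ref{eq3.2}). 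Using Lemma \ref{lem2.5}, the locus of $w_{n,r-n} \bigl(\begin{smallmatrix} I_{r-n}&x\\ &I_n\end{smallmatrix}\bigr)^\wedge h w_{n,r-n}^{-1}$ that genuinely contributes to $\Psi$ sits inside $P_n \tilde w_n P_n$; this identifies the zeta integrals, after unfolding, with $Z_n$-equivariant Mellin-type integrals against Whittaker functions $W_v$ on $G_n = \GL_n(E)$.

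The next step is to split the contributions by Bessel support. For $i > n$, the cell $\Omega_{\tilde w_i}$ is disjoint from $C(\tilde w_n)$, so choosing $i_0$ sufficiently large relative to the supports of the $f_{\tilde w_i}$ (and exploiting the local constancy and compact support of $B_m(\cdot, f_{\tilde w_i})$), the integrals $\Psi(B_m(\cdot, f_{\tilde w_i}), \xi_s^{i_0,v})$ and $\Psi(B_m(\cdot, f_{\tilde w_i}), M(s,\tau)\xi_s^{i_0,v})$ are controlled by data supported on $\bigcup_{j>n} \Omega_{\tilde w_j}$, and can be moved to the other side without interfering with the $i = n$ term. What remains is an identity that says, for every generic $\tau$ and every $W_v \in \CW(\tau, \psi_{Z_n}^{-1})$, a certain Mellin transform of a specific Whittaker-type function $\Phi$ built from $B_m(\cdot, f_{\tilde w_n})$ vanishes.

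I would then invoke Proposition \ref{prop3.3} (the Jacquet--Shalika density lemma): the function $\Phi$ is $(Z_n, \psi_{Z_n})$-equivariant and, because $f_{\tilde w_n}$ is compactly supported, has compact support modulo $Z_n$ on each level set of $|\det|$. The vanishing of the Mellin integrals against all such $W_v$ forces $\Phi \equiv 0$, and consequently $B_m(a \tilde w_n, f_{\tilde w_n}) = 0$ for all $a \in A_{\tilde w_n}$ and all sufficiently large $m$. At this point Theorem \ref{thm2.6}(1) applied to $w = \tilde w_n$ produces $f'_{\tilde w_n} \in C_c^\infty(\Omega_{\tilde w_n} - C(\tilde w_n), \omega)$ with $B_m(g, f_{\tilde w_n}) = B_m(g, f'_{\tilde w_n})$ for large $m$; and since the Bruhat cells $C(w')$ with $w' > \tilde w_n$, $w' \notin B(G)$ do not support Bessel functions, Theorem \ref{thm2.6}(2) further replaces $f'_{\tilde w_n}$ by a function supported on $\bigcup_{j > n} \Omega_{\tilde w_j}$. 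A partition-of-unity argument as in \cite{J} and the proof of Corollary \ref{cor2.7} then redistributes this function into pieces $f''_{\tilde w_j} \in C_c^\infty(\Omega_{\tilde w_j}, \omega)$ for $j > n$, completing the inductive step.

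The principal difficulty will be step (ii), the bookkeeping of the zeta integrals $\Psi(B_m(\cdot, f_{\tilde w_i}), \xi_s^{i_0,v})$ for $i > n$: one has to verify that after taking $i_0$ large depending on the $f_{\tilde w_i}$, each such integral truly represents a contribution one is free to move back to the other side of the functional equation, so that the remaining identity matches the hypotheses of Proposition \ref{prop3.3}. This is the technical heart of the argument and parallels the Bessel-support analysis carried out in \cite{Zh} for $\Sp_{2r}$ and $\RU(r,r)$.
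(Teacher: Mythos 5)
Your outline reproduces the paper's overall architecture — induction on $n$ with base case Corollary~\ref{cor2.7}, the explicit sections $\xi_s^{i,v}$ of Section~3, the local functional equation, the Jacquet--Shalika density statement Proposition~\ref{prop3.3}, Theorem~\ref{thm2.6}, and a partition-of-unity argument. However, there are two concrete gaps in the middle.

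First, the claim that the argument of $W_m$ ``genuinely contributes'' only on $P_n\tilde w_n P_n$ is correct only on one side of the functional equation. Pairing against the transformed section $M(s,\tau)\xi_s^{i_0,v}$ does produce arguments in $P_n\tilde w_n P_n$; but pairing against $\xi_s^{i_0,v}$ itself, the support of the section together with the $H_i$-invariance of $W_m$ forces the argument to lie in the set $\{X^\wedge : X\in\GL_r(E)\}$ inside the Siegel Levi $L_r$ — a different locus entirely. To show that the right-hand side of the functional equation is the same for $f$ and $f_0$, one needs $B_m(X^\wedge,f)=B_m(X^\wedge,f_0)$ for all $X\in\GL_r(E)$, which is Lemma~\ref{lem4.2}(1) and is never invoked in your sketch. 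Relatedly, the disjointness you cite, ``$\Omega_{\tilde w_i}$ is disjoint from $C(\tilde w_n)$,'' is weaker than what is needed on the $M(s,\tau)\xi_s$ side: the argument requires $P_n\tilde w_n P_n\cap\Omega_{\tilde w_i}=\emptyset$ for $i\ge n+1$, i.e.\ Lemma~\ref{lem4.2}(2). Without these two inputs, ``moving the $i>n$ terms to the other side'' has no justification — on the $\xi_s$ side those terms do not individually vanish by support reasoning; it is the \emph{whole} sum that vanishes, and that is precisely the content of Lemma~\ref{lem4.2}(1).

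Second, the final support-pushing step is incomplete. After Theorem~\ref{thm2.6}(1) at $w=\tilde w_n$, the replacement $f'_{\tilde w_n}$ is supported on $\Omega_{\tilde w_n}-C(\tilde w_n)$, which still contains Bruhat cells $C(w')$ with $w'\in B(G)$ and $\tilde w_n<w'\le w_{\max}=w_\ell^{L_n}\tilde w_n$. Such a $w'$ satisfies $\theta_{w'}\supset\theta_{w_{\max}}=\{\alpha_{n+1},\dots,\alpha_r\}$, so $\alpha_j\in\theta_{w'}$ for all $j>n$ and hence $w'\not\ge\tilde w_j$ for any $j>n$; consequently $C(w')$ is \emph{not} contained in $\bigcup_{j>n}\Omega_{\tilde w_j}$, and since $w'\in B(G)$ these cells do support Bessel functions. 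Theorem~\ref{thm2.6}(2) therefore cannot be applied directly. One must first kill these intermediate cells, which requires knowing $B_m(aw',f_{\tilde w_n})=0$ for $a\in A_{w'}$ and all such $w'$, not merely at $w'=\tilde w_n$. This is why Proposition~\ref{prop4.3} is stated (and needed) for all $a\in\GL_n(E)$ rather than just for the scalar torus $A_{\tilde w_n}$: the paper then observes that every element of $A_{w'}w'$ with $w'\in[\tilde w_n,w_{\max}]\cap B(G)$ has the form $\bt_n(a)\tilde w_n$, deduces the vanishing on all these cells, and only then iterates Theorem~\ref{thm2.6}(1) before applying Theorem~\ref{thm2.6}(2).
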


If we take $n=r$ in Theorem \ref{thm4.1}, we see that $\CC(r)$ implies $$B_m(g,f)=B_m(g,f_0)$$
for all $g\in G$ and $m$ large. Thus we get $\pi\cong \pi_0$ by the uniqueness of Whittaker models and the irreducibility of $\pi,\pi_0$.

 The idea of the proof of Theorem \ref{thm4.1} is the same as in the $\Sp_{2r}$ case considered in \cite[Proposition 4.1]{Zh}. The only difference is that the local zeta integrals for $\RU_{2r+1}$ and $\Sp_{2r}$ are different.

We will prove Theorem \ref{thm4.1} by induction. Note that the base case when $n=0$ is proved in Corollary \ref{cor2.7}. We assume the following 

\textbf{Inductive Hypothesis:}  Let $n$ be a positive integer with $1\le n\le r$. We assume that the condition $\CC(n-1)$ implies that there exist functions $f_{\tilde w_i}\in C_c^\infty(\Omega_{\tilde w_i},\omega)$ ($n\le i\le r$) such that 
$$B_m(g,f)-B_m(g,f_0)=\sum_{i=n}^r B_m(g,f_{\tilde w_i}).$$

Recall that $P_n$ is the standard parabolic subgroup of $G$ with Levi $L_n$. The simple roots in $L_n$ is the set $\wpair{\alpha_1,\dots,\alpha_{n-1}}$. 
Denote $$S_n^-=\wpair{\begin{pmatrix}I_n &x&y\\ &I_{2r-2n+1}& x'\\ &&I_n \end{pmatrix}\in G}$$
and 
$$S_n^+=\wpair{\begin{pmatrix}u_1&&\\ &u_2&\\ &&u_1' \end{pmatrix}\in G, u_1\in Z_n, u_2\in N_{2r-2n+1}}.$$
Note that $N=S_n^+\cdot S_n^-$. By \cite[p. 12]{Ca}, the product map 
$$P_n\times \wpair{\tilde w_n}\times S_n^-\ra P_n \tilde w_n P_n$$
induces an isomorphism.

Let $m$ be a positive integer, we have defined $N_m$ in $\S$2.1.

For $a\in \GL_n(E)$, denote by $\bt_n(a)$ the element 
$$\begin{pmatrix}a &&&\\ &1&&\\ &&\dots& \\&&&1 \end{pmatrix}^\wedge \in G.$$

\begin{lem}\label{lem4.2}
\begin{enumerate}
\item We have 
$$B_m(a^{\wedge},f)-B_m(a^\wedge,f_0)=0, \forall a\in \GL_r(E),$$
for large enough $m$ depending only on $f,f_0$.
\item We have $P_n\tilde w_n P_n \cap \Omega_{\tilde w_i}=\emptyset$ for $i\ge n+1$. In particular, we have $$B_m(g,f_{\tilde w_i})=0,$$
for all $g\in P_n\tilde w_n P_n$ and $i\ge n+1$, and thus 
$$B_m(g,f)-B_m(g,f_0)=B_m(g,f_{\tilde w_n}),g\in P_n\tilde w_n P_n,$$
for sufficiently large $m$ depending only on $f,f_0$.
\item Let $f_{\tilde w_n}\in C_c^\infty(\Omega_{\tilde w_n},\omega)$ be as in the inductive hypothesis. For sufficiently large $m$ depending only on $f_{\tilde w_n}$ (and hence only on $f,f_0$), we have 
$$B_m(\bt_n(a)\tilde w_n u_0, f_{\tilde w_n})=0,$$
for all $a\in \GL_n(E)$ and $u_0\in S_n^--(N_m\cap S_n^-)$.
\item For a fixed $m$ and each integer $k$, the set $\wpair{a\in \GL_n(E)| B_m(\bt_n(a)\tilde w_n, f_{\tilde w_n})\ne 0, |\det(a)|=q_E^k}$ is compact modulo $Z_n$.
\end{enumerate}
\end{lem}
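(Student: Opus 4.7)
The approach to all four parts is a blend of Bruhat-cell combinatorics and the Howe-vector equivariance $B_m(gh,f)=\psi_m(h)B_m(g,f)$ for $h\in H_m$. The key numerical invariant is $c_r(w)$, defined as the number of occurrences of the simple reflection $s_r$ (associated with $\alpha_r$) in any reduced expression of $w\in\bW$; this count is well defined because every braid relation in $\bW$ preserves the multiplicity of each simple reflection. A direct check shows $c_r(w)$ equals the number of negative entries when $w$ is written as a signed permutation in $\bW\cong S_r\ltimes\{\pm 1\}^r$; in particular $c_r(\tilde w_i)=i$ and $c_r(\tilde w_n)=n$. Bruhat's subword criterion then gives $w\ge w'\Rightarrow c_r(w)\ge c_r(w')$, a constraint that will block the relevant comparisons in Parts (1) and (2).

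For Part (1), I would decompose $a=b_1\sigma b_2$ in $\GL_r(E)$ with $b_j$ upper triangular and $\sigma\in S_r$, so that $a^{\wedge}=b_1^{\wedge}\sigma^{\wedge}b_2^{\wedge}\in C(\sigma^{\wedge})$. Since $\sigma^{\wedge}$ lies in $\langle s_1,\dots,s_{r-1}\rangle$ we have $c_r(\sigma^{\wedge})=0<i$, hence $\sigma^{\wedge}\not\ge\tilde w_i$ and $a^{\wedge}\notin\Omega_{\tilde w_i}$ for all $i\ge 1$. The $B\times B$-invariance of $\Omega_{\tilde w_i}$ makes $f_{\tilde w_i}$ vanish on $Na^{\wedge}N$, so each term $B_m(a^{\wedge},f_{\tilde w_i})$ is zero; combined with the inductive hypothesis $\CC(n-1)$ the full difference vanishes. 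For Part (2), Lemma~\ref{lem2.5} identifies the cells in $P_n\tilde w_n P_n$ with $C(u\tilde w_n u')$ for $u,u'$ in the Weyl group of $L_n$, which coincides with $\langle s_1,\dots,s_{n-1}\rangle$. Consequently $c_r(u\tilde w_n u')=c_r(\tilde w_n)=n<i$ for $i\ge n+1$, and the subword criterion rules out $u\tilde w_n u'\ge\tilde w_i$, giving $P_n\tilde w_n P_n\cap\Omega_{\tilde w_i}=\emptyset$. The inductive expansion on $P_n\tilde w_n P_n$ therefore collapses to the single term $B_m(g,f_{\tilde w_n})$.

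Part (3) is a Howe-vector argument in the spirit of Lemma~\ref{lem2.2}. Given $u_0\in S_n^-\setminus(N_m\cap S_n^-)$, some component of $u_0$ along a positive root subgroup $U_\beta$ violates the valuation bound dictated by $H_m$. I would select a simple root $\gamma$ and $h=\bx_\gamma(y)\in H_m\cap U_\gamma$ for which the commutator $[u_0,h]$ is a single root element whose parameter is a nonzero scalar multiple of the bad coordinate of $u_0$, and for which $(\bt_n(a)\tilde w_n)^{-1}h(\bt_n(a)\tilde w_n)$ still lies in $H_m$. Exploiting the equivariance $B_m(\bt_n(a)\tilde w_n u_0 h,f_{\tilde w_n})=\psi_m(h)B_m(\bt_n(a)\tilde w_n u_0,f_{\tilde w_n})$ and re-expressing the same quantity by pushing $h$ past $u_0$ through the commutator yields an identity of the shape $(1-\psi(cy))B_m(\bt_n(a)\tilde w_n u_0,f_{\tilde w_n})=0$ with some nonzero $c$ depending on the bad coordinate; choosing $y$ with $\psi(cy)\ne 1$ forces the vanishing. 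Part (4) is softer: since $f_{\tilde w_n}\in C_c^{\infty}(\Omega_{\tilde w_n},\omega)$ is compactly supported, $W^{f_{\tilde w_n}}$ has compact support modulo $N$ on the left, and the right-$H_m$-equivariance of $B_m(\cdot,f_{\tilde w_n})$ combined with the Iwasawa decomposition of $a\in\GL_n(E)$ confines $\{a:B_m(\bt_n(a)\tilde w_n,f_{\tilde w_n})\ne 0,\,|\det(a)|=q_E^k\}$ to finitely many $Z_n$-cosets.

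The main obstacle is Part (3), which requires choosing the root subgroup $U_\gamma$ and the parameter $y$ with enough care that $\bx_\gamma(y)\in H_m$, its conjugate by $\bt_n(a)\tilde w_n u_0$ remains in a tractable subgroup, and its commutator with $u_0$ detects the bad coordinate through a nontrivial additive character. The presence of the sign-changing reflection $s_r$ makes the combinatorics more delicate than in the $\GL$ case, but the parallel computation for $\Sp_{2r}$ in \cite[Lemma 4.2]{Zh} furnishes a reliable template.
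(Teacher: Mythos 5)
Your approach for parts (1) and (2) is correct but takes a genuinely different combinatorial route from the paper's. For part (2), the paper bounds $w$ by $w_{\max}=w_\ell^{L_n}\tilde w_n$ via Lemma~\ref{lem2.5}, observes that $w_{\max}\in B(G)$ with $\theta_{w_{\max}}=\{\alpha_{n+1},\dots,\alpha_r\}$, and then uses the order-reversing bijection $w\mapsto\theta_w$ on $B(G)$: since $\alpha_i\in\theta_{w_{\max}}$ but $\alpha_i\notin\theta_{\tilde w_i}=\Delta\setminus\{\alpha_i\}$, one gets $w_{\max}\not\geq\tilde w_i$. Your $c_r$-counting argument is more elementary (it avoids $B(G)$ and the $\theta$-bijection), invoking only the subword characterization of Bruhat order, and it reaches the same conclusion. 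Parts (3) and (4) are delegated by both you and the paper to the ``root-killing'' template worked out in the $\Sp_{2r}$ case in \cite{Zh}, so there is no real divergence there.

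One justification in your write-up is wrong as stated, although the conclusion happens to be correct. You claim that $c_r(w)$ is well defined ``because every braid relation in $\bW$ preserves the multiplicity of each simple reflection.'' That general statement is false: the type-$A$ braid relation $s_is_{i+1}s_i=s_{i+1}s_is_{i+1}$ (for $i\le r-2$) sends a word with two $s_i$'s and one $s_{i+1}$ to a word with one $s_i$ and two $s_{i+1}$'s, so the $s_i$-count and $s_{i+1}$-count are \emph{not} braid-invariant. What saves your argument is that those odd-order braid relations never involve $s_r$, and the only braid relation containing $s_r$, namely $s_{r-1}s_rs_{r-1}s_r=s_rs_{r-1}s_rs_{r-1}$, has even order and preserves both counts. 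Equivalently, $\alpha_r$ generates its own $W$-orbit among simple roots, so the number of $s_r$'s in any reduced expression is the number of short-root inversions of $w$, which is manifestly well defined and equals the number of sign changes in the signed-permutation representation. You should replace the over-broad braid-relation claim with this observation; with that fixed, your $c_r$-monotonicity under Bruhat order and the computation $c_r(\tilde w_i)=i$ go through, and the rest of the argument is sound.
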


\begin{proof} The proof of this lemma is the same as the proof of \cite[Lemma 4.3]{Zh}. For later use, we will repeat the proof of (2) here. 

Suppose that $P_n\tilde w_n P_n\cap \Omega_{\tilde w_i}\ne \emptyset$ for some $i\ge n+1$. Then there exists a $w\in \bW$ such that $C(w)\subset P_n\tilde w_n P_n\cap \Omega_{\tilde w_i}$. By Lemma \ref{lem2.5}, we have 
$$w_{\max}=w_{\ell}^{L_n}\tilde w_n\ge w\ge \tilde w_i.$$
A matrix calculation shows that 
$$w_\ell w_{\max}=\begin{pmatrix}I_n&&\\ &J_{2r-2n+1}&\\ &&I_n \end{pmatrix},$$
which is the long Weyl element of the Levi subgroup $M_{\max}\cong G_1^n\times \RU_{2r-2n+1}$. Thus $w_{\max}\in B(G)$. From the matrix form of $w_\ell w_{\max }$, we can also read that
$$\theta_{w_{\max}}=\wpair{\alpha_i| n+1\le i\le r}.$$
On the other hand, we have $\theta_{\tilde w_i}=\Delta-\wpair{\alpha_i}$. Thus for $i\ge n+1$, we cannot have $\theta_{w_{\max}}\subset \theta_{\tilde w_i}$. Since the map $w\mapsto \theta_w$ is order-reversing, we cannot have $w_{\max}\ge \tilde w_i$. Contradiction. This proves (2).
\end{proof}

\begin{prop}\label{prop4.3}
Under the inductive hypothesis, the condition $\CC(n)$ implies that 
$$B_m(\bt_n(a)\tilde w_n, f_{\tilde w_n})=0,$$
for all $a\in \GL_n(E)$ and sufficiently large $m$ depending only on $f,f_0$.
\end{prop}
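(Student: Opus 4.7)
The plan is to apply the local functional equation (Proposition~\ref{prop1.1}) to the partial Bessel functions of $\pi$ and $\pi_0$, exploit $\CC(n)$ to cancel the common gamma factor, and use the special sections from $\S 3.1$ to reduce the resulting identity to a Mellin-type integral against arbitrary Whittaker functions of $\GL_n(E)$, to which the Jacquet--Shalika vanishing statement (Proposition~\ref{prop3.3}) will apply.

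\smallskip
Set $D_m := B_m(\cdot,f) - B_m(\cdot,f_0)$. Applying Proposition~\ref{prop1.1} separately to the Whittaker functions of $\pi$ and $\pi_0$ and using the equality of gamma factors under $\CC(n)$, I obtain, for every irreducible generic $\tau$ of $\GL_n(E)$ and every $\xi_s\in V(s,\tau,\psi^{-1})$,
$$\Psi(D_m, M(s,\tau)\xi_s) \;=\; \gamma(s,\pi\times\tau,\psi)\,\Psi(D_m,\xi_s).$$
By the inductive hypothesis, $D_m = \sum_{i=n}^{r} B_m(\cdot, f_{\tilde w_i})$ for sufficiently large $m$.

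\smallskip
For $v\in V_\tau$ and $i$ large enough that $w_{n,r-n}\ov{V}_{2n,i}w_{n,r-n}^{-1}\subset H_m$, I specialize $\xi_s=\xi_s^{i,v}$. Unfolding $\Psi(D_m,\xi_s^{i,v})$ via the support description (\ref{eq3.1}) and writing $h=m(a)\bar u$ with $a\in Z_n\backslash\GL_n(E)$ and $\bar u\in \ov{V}_{2n,i}$, a matrix computation based on $w_{n,r-n}\,m(a)\,w_{n,r-n}^{-1}=\bt_n(a)$ gives the argument of $D_m$ as
$$\begin{pmatrix}I_n & \\ x & I_{r-n}\end{pmatrix}^{\wedge}\bt_n(a)\,\tilde{\bar u},\qquad \tilde{\bar u}:=w_{n,r-n}\bar u w_{n,r-n}^{-1}\in H_m.$$
The product $\begin{pmatrix}I_n & \\ x & I_{r-n}\end{pmatrix}^{\wedge}\bt_n(a)$ lies in $\GL_r(E)^{\wedge}$, so Lemma~\ref{lem4.2}(1) kills $D_m$ on it, and the $H_m$-quasi-invariance of $D_m$ extends this vanishing to the full product. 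Hence $\Psi(D_m,\xi_s^{i,v})=0$, and the functional equation above forces
$$\Psi(D_m,\tilde\xi^{i,v}_{1-s})=0,\qquad\text{where }\tilde\xi^{i,v}_{1-s}=M(s,\tau)\xi_s^{i,v}.$$

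\smallskip
I will then unfold the left-hand side using (\ref{eq3.2}) on a large open compact $D\subset V_{2n}$. Writing $h=m(a)w_n u'$ with $u'\in D$ and using $w_{n,r-n}w_n w_{n,r-n}^{-1}=\tilde w_n$, the argument of $D_m$ becomes
$$\begin{pmatrix}I_n & \\ x & I_{r-n}\end{pmatrix}^{\wedge}\bt_n(a)\,\tilde w_n\,\tilde u',\qquad \tilde u':=w_{n,r-n}u'w_{n,r-n}^{-1}\in N,$$
which lies in $P_n\tilde w_n P_n$. By Lemma~\ref{lem4.2}(2), only the term $B_m(\cdot,f_{\tilde w_n})$ of $D_m$ survives. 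Moving the $\GL_r^{\wedge}$-unipotent prefix past $\bt_n(a)$, decomposing $\tilde u'$ according to $N=S_n^+\cdot S_n^-$, absorbing the $S_n^+$-factor and the prefix into the Whittaker character $\psi_N$ via (\ref{eq2.1}), and using Lemma~\ref{lem4.2}(3) to restrict the $S_n^-$-integration to $N_m\cap S_n^-$ (outside of which the partial Bessel function vanishes), the identity $\Psi(D_m,\tilde\xi^{i,v}_{1-s})=0$ collapses to
$$0 \;=\; C\int_{Z_n\backslash\GL_n(E)} B_m\bigl(\bt_n(a)\tilde w_n,f_{\tilde w_n}\bigr)\,W^*_v(a)\,|\det a|_E^{-s-c}\,da$$
for a nonzero constant $C$, a fixed rational shift $c$, and all $s$ with $\Re(s)\ll 0$ for which the integrals converge absolutely.

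\smallskip
To conclude, as $\tau$ ranges over all irreducible generic representations of $\GL_n(E)$, so does $\tau^*$, and $W^*_v$ exhausts $\CW(\tau^*,\psi_{Z_n}^{-1})$. Combined with the compact-support statement of Lemma~\ref{lem4.2}(4), Proposition~\ref{prop3.3} applies to the function $a\mapsto B_m(\bt_n(a)\tilde w_n,f_{\tilde w_n})$ on $\GL_n(E)$ and yields $B_m(\bt_n(a)\tilde w_n,f_{\tilde w_n})=0$ for all $a\in\GL_n(E)$.

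\smallskip
The main technical obstacle is the explicit unfolding of the left-hand side: tracking the conjugation $w_{n,r-n}u'w_{n,r-n}^{-1}$ within the decomposition $N=S_n^+\cdot S_n^-$, verifying which unipotent directions are absorbed by $\psi_N$ via (\ref{eq2.1}) and which survive to meet the compact-support constraint of Lemma~\ref{lem4.2}(3), and confirming that the resulting Mellin integrand carries a genuinely nonzero constant. The parallel unfolding for $\Sp_{2r}$ is executed in \cite[Proposition~4.6]{Zh}; the $\RU_{2r+1}$ case should follow the same steps once the corresponding root-space bookkeeping is carried out.
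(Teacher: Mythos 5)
Your overall strategy matches the paper's: show $\Psi(D_m,\xi_s^{i,v})=0$ by unfolding on the cell $V_{2n}M_{2n}\ov{V}_{2n}$ and invoking Lemma~\ref{lem4.2}(1), deduce $\Psi(D_m,\tilde\xi^{i,v}_{1-s})=0$ from the equality of gamma factors and the functional equation, unfold on $V_{2n}M_{2n}w_nV_{2n}$ to obtain a Mellin integral of $B_m(\bt_n(\cdot)\tilde w_n,f_{\tilde w_n})$ against $W^*_v$, and finish with Lemma~\ref{lem4.2}(4) and Proposition~\ref{prop3.3}. However, the intermediate step in your unfolding of $\Psi(D_m,\tilde\xi^{i,v}_{1-s})$ is not correct as written, and this is precisely the nontrivial part of the argument.

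Concretely, the prefix $\begin{pmatrix}I_n & \\ x & I_{r-n}\end{pmatrix}^{\wedge}$ is a \emph{lower}-triangular unipotent of $\RU_{2r+1}$; it does not lie in $N$ and so cannot be ``absorbed into the Whittaker character $\psi_N$'' as a left factor via Eq.~(\ref{eq2.1}). Moving it past $\bt_n(a)$, as you propose, only replaces $x$ by $xa$ and leaves it lower-triangular. The correct manipulation, as in the paper, is to commute the (modified) unipotent past $\tilde w_n$ as well: conjugation by $\tilde w_n$ swaps the extreme $n\times n$ blocks, turning the lower-unipotent prefix into the upper-unipotent element $j(\hat u(xa))$, which lies in $S_n^-$. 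Combined with $\tilde u'=j(u(y))$, which is already in $S_n^-$ (so the decomposition $N=S_n^+\cdot S_n^-$ you invoke is moot here — there is no $S_n^+$ factor to absorb), the argument becomes $\bt_n(a)\tilde w_n\cdot u_0$ with $u_0\in S_n^-$. Only at that point can one invoke Lemma~\ref{lem4.2}(3) to restrict $u_0$ to $N_m\cap S_n^-$ and Eq.~(\ref{eq2.1}) to strip it off (its contribution to $\psi_m$ is trivial since its superdiagonal entries in the range $1\le i\le r$ vanish). One should also record that the change of variable $x\mapsto xa^{-1}$ shifts the exponent from $-n$ to $-r$, giving the precise Mellin shift. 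You flag this bookkeeping as the ``main technical obstacle,'' and indeed it is; but as written the claim ``absorbing the prefix into $\psi_N$'' would fail, so the gap needs to be filled by the commutation past $\tilde w_n$ rather than by left-equivariance.
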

In the following proof, we will write $j(g)=w_{n,r-n}gw_{n,r-n}^{-1}$ for $g\in G$. Then we have $\tilde w_n=j(w_n)$ and $$\bt_n(a)=j(\diag(1,\dots,1,a)^\wedge), a\in \GL_n(E).$$
Recall that we have identified $\RU_{2n}$ as a subgroup of $G=\RU_{2r+1}$ and thus $w_n\in \RU_{2n}$ can be viewed as an element in $G$. For an $a\in \GL_n(E)$, the element $m(a)=\diag(a,a^*)$ is identified with 
$$ \diag(1,\dots,1,a)^\wedge.$$
\begin{proof}
Let $m$ be sufficiently large integer such that the inductive hypothesis and Lemma \ref{lem4.2} hold.

Let $(\tau,V_\tau)$ be an irreducible generic representation of $G_n=\GL_n(E)$ and let $v\in V_\tau$.

Consider the open compact subset $V_{2n,m}$ of $V_{2n}$ defined by $V_{2n,m}=\wpair{u(y): j(u(y))\in H_m}$. Recall that we have identified $\RU_{2n}$ as a subgroup of $\RU_{2r+1}$ and thus it makes sense to talk $j(u(y))$.

 Let $i$ be an integer with $i\ge \max\wpair{m, i_0(v),I(V_{2n,m},v)}$. Then we can consider the section $\xi_s^{i,v}\in V(s,\tau,\psi^{-1})$ and $\tilde \xi_{1-s}^{i,v}=M(s,\tau)\xi_s^{i,v}$ defined in $\S$3.1.

We compute the integral $\Psi(W^f_m, \xi_s^{i,v})$. Since $ V_{2n}M_{2n}\ov{V}_{2n}$ is dense in $\RU_{2n}$, we will replace $N_{2n}\setminus \RU_{2n}$ by $N_{2n}\setminus V_{2n}M_{2n}\ov{V}_{2n}\cong Z_n\setminus G_n \times \ov{V}_{2n}$ in the integral $\Psi(W_f^m,\xi_s^{i,v}).$ Note that for $g=u(x) m(a)\bar u\in V_{2n}M_{2n}\ov{V}_{2n}$, we can take the quotient Haar measure on $ Z_n\setminus G_n \times \ov{V}_{2n}$ by $dg=|\det(a)|_E^{-n}d\bar u da$.

Thus 
\begin{align*}
&\Psi(W_m^f,\xi_s^{i,v})\\
=&\int_{Z_n\setminus G_n}\int_{\ov{V}_{2n}} \int_{\Mat_{(r-n)\times n}}W_m^f\left(j\left(\begin{pmatrix}I_{r-n} &x\\ &I_{n} \end{pmatrix}^\wedge m(a)\bar u \right)\right)\xi_s^{i,v}(m(a)\bar u,I_n)|\det(a)|_E^{-n}dx d\bar u da.
\end{align*}
By Eq.(\ref{eq3.1}), we have 
\begin{align*}&\Psi(W_m^f,\xi_s^{i,v})\\
=&\int_{Z_n\setminus G_n}\int_{\ov{V}_{2n,i}} \int_{\Mat_{(r-n)\times n}}W_m^f\left(j\left(\begin{pmatrix}I_{r-n} &x\\ &I_{n} \end{pmatrix}^\wedge m(a)\bar u \right)\right) |\det(a)|_E^{s+\frac{-n-1}{2}}W_v(a)dx d\bar u da.
\end{align*}
For $\bar u\in \ov{V}_{2n,i}$, we have $j(\bar u)\in H_i$. Thus by Eq.(\ref{eq2.1}), we have 
$$W_m^f(gj(\bar u))=W_m^f(g), \forall g\in \RU_{2r+1}.$$
Thus we get 
\begin{align*}&\Psi(W_m^f,\xi_s^{i,v})\\
=&\vol(\ov{V}_{2n,i})\int_{Z_n\setminus G_n}\int_{\Mat_{(r-n)\times n}}W_m^f\left(j\left(\begin{pmatrix}I_{r-n} &x\\ &I_{n} \end{pmatrix}^\wedge m(a) \right)\right) |\det(a)|_E^{s+\frac{-n-1}{2}}W_v(a)dx  da.
\end{align*}
One can check that 
$$j\left(\begin{pmatrix}I_{r-n} &x\\ &I_{n} \end{pmatrix}^\wedge m(a) \right) $$
is of the form $X^\wedge$ with $X\in \GL_r(E)$. Thus by Lemma \ref{lem4.2} (1), we have 
$$W_m^f\left(j\left(\begin{pmatrix}I_{r-n} &x\\ &I_{n} \end{pmatrix}^\wedge m(a) \right)\right)=W_m^{f_0}\left(j\left(\begin{pmatrix}I_{r-n} &x\\ &I_{n} \end{pmatrix}^\wedge m(a) \right)\right). $$
Thus we get 
\begin{equation}\label{eq4.1}\Psi(W_m^f,\xi_s^{i,v})=\Psi(W_m^{f_0},\xi_s^{i,v}).\end{equation}
By assumption, we have $\gamma(s,\pi\times \tau,\psi)=\gamma(s,\pi_0\times \tau,\psi)$. By Eq.(\ref{eq4.1}) and the local functional equation, Proposition \ref{prop1.1}, we have 
\begin{equation}\label{eq4.2}\Psi(W_m^f, \tilde\xi_{1-s}^{i,v})=\Psi(W_m^{f_0},\tilde\xi_{1-s}^{i,v}).\end{equation}
We next consider the integral $\Psi(W_m^f, \tilde \xi_{1-s}^{i,v})$. Since $V_{2n}M_{2n}w_nV_{2n}$ is dense in $\RU_{2n}$, we will replace $N_{2n}\setminus \RU_{2n}$ by $N_{2n}\setminus V_{2n} M_{2n}w_n V_{2n}\cong Z_n\setminus G_n w_n V_{2n}$ in the integral $\Psi(W_f^m,\tilde \xi_{1-s}^{i,v})$. We then have 
\begin{align*}
&\Psi(W_m^f,\tilde \xi_{1-s}^{i,v})\\
=&\int_{Z_n\setminus G_n}\int_{V_{2n}}\int_{\Mat_{(r-n)\times n}}W_m^f\left(j\left(\begin{pmatrix}I_{r-n} &x\\ &I_{n} \end{pmatrix}^\wedge m(a)w_n u(y) \right)\right) \\
\quad & \cdot \tilde \xi_{1-s}^{i,v}(m(a)w_n u(y),I_n)|\det(a)|_E^{-n}dydxda.
\end{align*}
There is a similar expression for $\Psi(W_m^{f_0},\tilde \xi_{1-s}^{i,v})$.

We now consider the quantity inside $W_m^f$. We have 
\begin{align*}
&j\left(\begin{pmatrix}I_{r-n} &x\\ &I_{n} \end{pmatrix}^\wedge m(a)w_n u(y) \right)\\
=&j\left(m(a)\begin{pmatrix}I_{r-n} &xa\\ &I_{n} \end{pmatrix}^\wedge w_n u(y) \right)\\
=&j(m(a)w_n \hat u(xa) u(y))\\
=& \bt_n(a)\tilde w_n j(\hat u(xa)) j(u(y)),
\end{align*}
where $$\hat u(xa)=w_n^{-1} \begin{pmatrix}I_{r-n} &xa\\ &I_{n} \end{pmatrix}^\wedge w_n=\begin{pmatrix}I_{r-n}&&&xa &\\ &I_n &&&(xa)'\\ &&1&&\\ &&&I_n& \\ &&&&I_{r-n} \end{pmatrix},$$
where $(xa)'$ is uniquely determined by $xa$ such that $\hat u(xa)\in \RU_{2r+1}$. 
We have $$j(\hat u(xa))j(u(y))=\begin{pmatrix}I_{n}&&&(xa)' &y\\ &I_{r-n} &&&(xa)\\ &&1&&\\ &&&I_{r-n}& \\ &&&&I_{n} \end{pmatrix}\in S_n^-.$$
From the description of $P_n \tilde w_n P_n$, we see that $\bt_n(a)\tilde w_n j(\hat u(xa))j(u(y))\in P_n\tilde w_n P_n$. By Lemma \ref{lem4.2}(2), we get 
\begin{align*}
&W_m^f( \bt_n(a)\tilde w_n j(\hat u(xa))j(u(y)))-W_m^{f_0}(\bt_n(a)\tilde w_n j(\hat u(xa))j(u(y)))\\
=&B_m(\bt_n(a)\tilde w_n j(\hat u(xa))j(u(y)),f)-B_m(\bt_n(a)\tilde w_n j(\hat u(xa))j(u(y)),f_0)\\
=&B_m(\bt_n(a)\tilde w_n j(\hat u(xa))j(u(y)),f_{\tilde w_n}).
\end{align*}
By the above discussion and Eq.(\ref{eq4.2}), we have 
\begin{align}
0&=\Psi(W_m^f,\xi_s^{i,v})-\Psi(W_m^{f_0}, \xi_s^{i,v}) \label{eq4.3}\\
&=\int_{Z_n\setminus G_n}\int_{V_{2n}}\int_{\Mat_{(r-n)\times n}}B_m(\bt_n(a)\tilde w_n j(\hat u(xa))j(u(y)),f_{\tilde w_n}) \nonumber\\
&\cdot  \tilde \xi_{1-s}^{i,v}(m(a)w_n u(y),I_n)|\det(a)|_E^{-n}dydxda \nonumber\\
&=\int_{Z_n\setminus G_n}\int_{V_{2n}}\int_{\Mat_{(r-n)\times n}}B_m(\bt_n(a)\tilde w_n j(\hat u(x))j(u(y)),f_{\tilde w_n}) \nonumber\\
&\cdot  \tilde \xi_{1-s}^{i,v}(m(a)w_n u(y),I_n)|\det(a)|_E^{-r}dydxda,\nonumber
\end{align}
where in the last step, we changed variable $x\mapsto xa^{-1}$.

Let $D_m=\wpair{j(\hat u(x))j(u(y))| j(\hat u(x))j(u(y))\in H_m}$. Then for $j(\hat u(x))j(u(y)) \notin D_m$, we have 
$$B_m(\bt_n(a)\tilde w_n j(\hat u(x))j(u(y)),f_{\tilde w_n})=0$$
by Lemma \ref{lem4.2}(3). On the other hand, if $j(\hat u(x))j(u(y)) \in D_m$, we have 
$$B_m(\bt_n(a)\tilde w_n j(\hat u(x))j(u(y)),f_{\tilde w_n})=B_m(\bt_n(a)\tilde w_n ,f_{\tilde w_n}) $$
by Eq.(\ref{eq2.1}). On the other hand, for $ j(\hat u(x))j(u(y)) \in D_m$, we have $u(y)\in V_{2n,m}$. By our choice of $i$ and Eq.(\ref{eq3.2}), we have 
$$ \tilde \xi_{1-s}^{i,v}(m(a)w_n u(y),I_n)=\vol(\ov{V}_{2n,i})|\det(a)|_E^{1-s+\frac{n-1}{2}}W_v^*(a). $$
Now Eq.(\ref{eq4.3}) reads
$$0=\int_{Z_n\setminus G_n} B_m(\bt_n(a)\tilde w_n,f_{\tilde w_n}) |\det(a)|^{-s+\frac{n+1}{2}-r}W_v^*(a)da.$$
Note that this equation holds for any irreducible generic representation $(\tau,V_\tau)$ and any vector $v\in V_\tau$. Thus we get 
$$B_m(\bt_n(a)\tilde w_n, f_{\tilde w_n})=0$$
by Lemma \ref{lem4.2}(4) and Proposition \ref{prop3.3}. This concludes the proof.
\end{proof}

Now we can finish the proof of Theorem \ref{thm4.1} and hence the proof of our main theorem, Theorem \ref{thm2.8}.

\begin{proof}[Proof of Theorem $\ref{thm4.1}$]
Let $w_{\max}=w_\ell^{L_n}\tilde w_n$ be as in the proof of Lemma \ref{lem4.2}. We can check that $$A_{w_{\max}}=E^1\cdot \wpair{\bt_n(\diag(a_1,\dots,a_n)),a_i\in E^\times}.$$
Thus any element $a\in A_{w_{\max}}$ has the form $z\bt_n(a_0)$ with $z\in E^1$ and $a_0=\diag(a_1,\dots,a_n)\in \GL_n(E)$. By Proposition \ref{prop4.3}, we have 
$$B_m(a\tilde w_n,f_{\tilde w_n})=\omega(z)B_m(\bt_n(a_0)\tilde w_n,f_{\tilde w_n})=0.$$

For any $w\in B(G)$ with $\tilde w_n \le w\le w_{\max}$, we can write $w=w'\tilde w_n$ for a Weyl element $w'$ of the Levi $L_n$, which has a representative of the form $\bt_n(b')$ with $b'\in \GL_n$. Since $w\le w_{\max}$, we can check that $A_w\subset A_{w_{\max}}$. Thus an element $a\in A_w$ is also of the form $\bt_n(a_0)$ for certain torus element in $\GL_n(E)$. Following a similar argument as above, Proposition \ref{prop4.3} implies that 
\begin{equation}\label{eq4.4}B_m(aw, f_{\tilde w_n})=0,\end{equation}
for all $a\in A_w$.

Denote $$\Omega_{\tilde w_n}'=\cup_{w\in B(G), w>w_{\max}\atop d_B(w,w_{\max})=1}\Omega_w.$$
Eq.(\ref{eq4.4}) and Theorem \ref{thm2.6} imply that there exists a function $f'_{\tilde w_n}\in C_c^\infty(\Omega_{\tilde w_n}',\omega)$ such that 
$$B_m(g,f_{\tilde w_n})=B_m(g,f'_{\tilde w_n}),$$
for all $g\in G$ and sufficiently large $m$ (which depends on $f_{\tilde w_n}$ and thus can be chosen so that it only depends on $f,f_0$). 

We now classify the set $\wpair{w\in B(G)| w>w_{\max}, d_B(w,w_{\max})=1}$. From the proof of Lemma \ref{lem4.2}, we see that $$\theta_{w_{\max}}=\wpair{\alpha_i| n+1\le i\le r}.$$
Note that the bijection $w\mapsto \theta_w$ from $B(G)$ to the subsets of $\Delta$ is order reversing. Thus if $w>w_{\max}$ and $d_B(w,w_{\max})=1$, there exists an $i$ with $n+1\le i\le r$ such that $\theta_w=\theta_{\max}-\wpair{\alpha_i}$. Denote 
$$w_i'=w_{\theta_{\max}-\wpair{\alpha_i}}, n+1\le i\le r.$$
We then have $$\wpair{w\in B(G)|w>w_{\max}, d_B(w,w_{\max})=1}=\wpair{w_i'|n+1\le i\le r}.$$

We then have 
$$\Omega_{\tilde w_n}'=\cup_{i=n+1}^r \Omega_{w_i'}.$$
By a partition of unity argument, there exists $f_{w_i'}\in C_c^\infty(\Omega_{w_i'},\omega)$ for $n+1\le i\le r$ such that 
$$f_{\tilde w_n}'=\sum_{i=n+1}^r f_{w_i'}.$$
Thus we have 
\begin{equation}\label{eq4.5}B_{m}(g,f_{\tilde w_n})=\sum_{i=n+1}^r B_m(g,f_{w_i'}).\end{equation}

Fix $i$ with $n+1\le i\le r$. Note that $\theta_{\tilde w_i}=\Delta-\wpair{\alpha_i}$, see Lemma \ref{lem2.4}. Since $\theta_{\max}-\wpair{\alpha_i}\subset \Delta-\wpair{\alpha_i}$, we have $w_i'> \tilde w_i$. Thus $\Omega_{w_i'}\subset \Omega_{\tilde w_i}$. The set $\Omega_{w_i'}$ is in fact open in $\Omega_{\tilde w_i}$, see Proposition \cite[prop2.5]{Zh} for example. Thus $C_c^\infty(\Omega_{ w'_i},\omega)\subset C_c^\infty(\Omega_{\tilde w_i},\omega)$. In particular, $f_{w_i'}$ can be viewed as an element of $C_c^\infty(\Omega_{\tilde w_i},\omega)$. We now define 
$$f'_{\tilde w_i}=f_{w_i'}+f_{\tilde w_i}\in C_c^\infty(\Omega_{\tilde w_i},\omega).$$
Now Eq.(\ref{eq4.5}) and the inductive hypothesis imply that 
$$B_m(g,f)-B_m(g,f_0)=\sum_{i=n+1}^r B_m(g,f'_{\tilde w_i}).$$
This finishes the proof of Theorem \ref{thm4.1} and hence the proof of Theorem \ref{thm2.8}.
\end{proof}

\section{Stability of the local gamma factors for $\RU_{2r+1}\times \Res_{E/F}(\GL_1)$}

Using our technique of the proof of the local converse theorem, we can prove the stability of the local gamma factors for $\RU_{2r+1}\times \Res_{E/F}(\GL_1)$:
\begin{thm}\label{thm5.1}
Let $\pi,\pi_0$ be two irreducible supercuspidal generic representation of $\RU_{2r+1}$ with the same central character $\omega$. Then there exists an integer $l=l(\pi,\pi_0)$ such that for any quasi-character $\chi$ of $E^\times$ with $\cond(\chi)>l$, we have 
$$\gamma(s,\pi\times \chi,\psi)=\gamma(s,\pi_0\times \chi,\psi).$$
\end{thm}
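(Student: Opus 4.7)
The plan is to revisit the computation in the proof of Proposition~\ref{prop4.3} with $n = 1$ and $\tau = \chi$ a quasi-character of $E^\times$, observing that one half of the required identity already holds unconditionally while the other can be forced by sufficient ramification of $\chi$, via the familiar mechanism of killing a locally constant function of bounded level. First, choose $f \in \CM(\pi)$ and $f_0 \in \CM(\pi_0)$ with $W^f(I_{2r+1}) = W^{f_0}(I_{2r+1}) = 1$, which is possible by the genericity of $\pi$ and $\pi_0$.

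For sufficiently large $m = m(f, f_0)$, Lemma~\ref{lem4.2}(1) gives $B_m(a^{\wedge}, f) = B_m(a^{\wedge}, f_0)$ for every $a \in \GL_r(E)$; running verbatim the calculation leading to Eq.~(\ref{eq4.1}) with $n = 1$ delivers
$$\Psi(W_m^f, \xi_s^{i, 1}) = \Psi(W_m^{f_0}, \xi_s^{i, 1})$$
for every quasi-character $\chi$, with no ramification hypothesis. For the dual side, apply Corollary~\ref{cor2.7} to write $W_m^f - W_m^{f_0} = \sum_{j=1}^{r} B_m(\cdot, f_{\tilde w_j})$ with $f_{\tilde w_j} \in C_c^\infty(\Omega_{\tilde w_j}, \omega)$, and substitute into $\Psi(W_m^f, \tilde\xi_{1-s}^{i, 1}) - \Psi(W_m^{f_0}, \tilde\xi_{1-s}^{i, 1})$. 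As in the proof of Proposition~\ref{prop4.3}, the argument of each $B_m$ lies in $P_1 \tilde w_1 P_1$, so Lemma~\ref{lem4.2}(2) kills all $j \ne 1$; collapsing the $(x, y)$-integration using Lemma~\ref{lem4.2}(3) and the explicit formula~(\ref{eq3.2}) for $\tilde\xi_{1-s}^{i, 1}$, one is reduced, up to a nonzero constant $C$ independent of $\chi$, to the single Tate-type integral
$$\int_{E^\times} B_m\!\left(\bt_1(a)\tilde w_1,\, f_{\tilde w_1}\right) \chi^{*}(a)\, |a|_E^{1 - r - s}\, da, \qquad \chi^{*}(a) := \chi(\bar a^{-1}).$$

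By Lemma~\ref{lem4.2}(4) the function $a \mapsto B_m(\bt_1(a)\tilde w_1, f_{\tilde w_1})$ has compact support modulo $E^1$. Moreover, the shift $a \mapsto a(1 + \epsilon)$ for $\epsilon \in \fp_E^m$ right-multiplies $\bt_1(a)\tilde w_1$ by $\tilde w_1^{-1} \bt_1(1+\epsilon) \tilde w_1$, which is a diagonal element of $\RU_{2r+1}$ whose entries are merely a permutation of those of $\bt_1(1+\epsilon)$ and so all lie in $1 + \fp_E^m$; this element therefore lies in $H_m$, and since $\psi_m$ is detected only by the super-diagonal entries of $e_m^{-1}(\cdot) e_m$, which vanish on diagonal matrices, $\psi_m$ evaluates to $1$ on it. By the right $H_m$-quasi-invariance of $B_m$, the integrand is therefore locally constant in $a$ of level $m$. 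For $\chi$ with $\cond(\chi) > l(\pi, \pi_0) := m$ the Tate integral vanishes, giving $\Psi(W_m^f, \tilde\xi_{1-s}^{i, 1}) = \Psi(W_m^{f_0}, \tilde\xi_{1-s}^{i, 1})$. Arranging the section $\xi_s^{i, 1}$ so that $\Psi(W_m^f, \xi_s^{i, 1}) \not\equiv 0$ as a rational function of $q_E^{-s}$ (possible by the nonvanishing results of \cite{BAS}), the local functional equation (Proposition~\ref{prop1.1}) then yields $\gamma(s, \pi \times \chi, \psi) = \gamma(s, \pi_0 \times \chi, \psi)$ for all $\chi$ with $\cond(\chi) > l$.

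The principal technical point to verify carefully will be the clean collapse of the $(x, y)$-integration into the nonzero constant $C$, uniformly in $a$ on the relevant support: this requires combining Lemma~\ref{lem4.2}(3), the explicit description~(\ref{eq3.2}) for $\tilde\xi_{1-s}^{i, 1}$ on the large Schubert cell, and an appropriate choice of section guaranteeing nonvanishing. The accompanying matrix verification that $\tilde w_1^{-1} \bt_1(1+\epsilon) \tilde w_1$ lies in $H_m$ with $\psi_m$ trivial, though routine, must also be carried out entry by entry using the explicit description of $\tilde w_1$ as the element swapping positions $1$ and $2r+1$ in $\RU_{2r+1}$.
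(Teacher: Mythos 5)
There is a genuine gap at the very last step, and it is exactly the obstruction the paper's proof is designed to work around. You fix $m = m(f, f_0)$ once and for all, carry out the reduction to the Tate-type integral
$$\int_{E^\times} B_m\bigl(\bt_1(a)\tilde w_1,\, f_{\tilde w_1}\bigr)\, \chi^{*}(a)\, |a|_E^{1-r-s}\, da,$$
and observe (correctly) that the integrand is invariant under $a \mapsto a(1+\epsilon)$ for $\epsilon \in \fp_E^m$, so that the integral vanishes once $\cond(\chi) > m$. This gives $\Psi(W_m^{f}, \tilde\xi_{1-s}^{i,1}) = \Psi(W_m^{f_0}, \tilde\xi_{1-s}^{i,1})$. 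To extract $\gamma(s,\pi\times\chi,\psi) = \gamma(s,\pi_0\times\chi,\psi)$ from this you then need $\Psi(W_m^{f}, \xi_s^{i,1}) \not\equiv 0$. But for the \emph{explicit} sections $\xi_s^{i,v}$ of \S3.1 — and these are the only sections for which your reduction to the Tate integral is valid — the open cell computation together with Lemma~\ref{lem5.2} and Corollary~\ref{cor2.3} collapses $\Psi(W_m^f, \xi_s^{i,v})$ to a constant multiple of $\int_{1+\fp_E^m}\chi(a)\,da$, because $W_m^f(\bt_1(a))$ is supported on $a \in 1+\fp_E^m$. When $\cond(\chi) > m$ this integral is identically zero, and so $\Psi(W_m^f, \xi_s^{i,1}) \equiv 0$ \emph{as a rational function of $q_E^{-s}$}. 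The functional equation then reads $0 = 0$ for both $\pi$ and $\pi_0$, and nothing can be concluded about the gamma factors. You cannot repair this by switching to a generic section whose nonvanishing is guaranteed by \cite{BAS}, because the Tate-integral reduction and the identity $\Psi(W_m^f, \xi_s^{i,1}) = \Psi(W_m^{f_0}, \xi_s^{i,1})$ both rely on the specific shape of $\xi_s^{i,v}$.

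This is precisely why the paper's proof first enlarges $m$ to ensure $m > \max\{\cond(\chi), k\}$ — so that $\Psi(W_m^f, \xi_s^i)$ is the explicit nonzero constant of Eq.~(\ref{eq5.3}) — and only then, in Eqs.~(\ref{eq5.6})--(\ref{eq5.7}), ``rewinds'' the resulting Tate integral from level $m$ back to a fixed level $k$ depending only on $f, f_0$, using Lemma~\ref{lem2.1}(3) and the support control from Eq.~(\ref{eq5.2}). The conclusion is then drawn from the smoothness of the level-$k$ object, whose level of local constancy $l$ is independent of $\chi$. Your observation about $\tilde w_1^{-1}\bt_1(1+\epsilon)\tilde w_1 \in H_m$ and the resulting level-$m$ local constancy of $B_m(\bt_1(a)\tilde w_1, f_{\tilde w_1})$ is correct and is morally the same smoothness phenomenon exploited by the paper; what is missing is the mechanism (the paper's rewinding step) that lets one have a $\chi$-dependent $m$ on the source side of the functional equation while still ending up with a $\chi$-independent bound $l$.
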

Here the local gamma factors are defined from the local functional equation as in Proposition \ref{prop1.1}.\\
\noindent \textbf{Remark:} Stability results of local gamma factors were proved in many different settings. For example, for a quasi-split classical group $G$, the stability of the local gamma factors for $G\times \GL_1$ arising from Langlands-Shahidi method is proved in \cite{CPSS08};  in \cite{CST}, the authors proved the stability of the exterior square local gamma factors for $\GL$ arising from Langlands-Shahidi method. Since it is not known that the local gamma factors defined in Proposition \ref{prop1.1} coincide with the local gamma factors arising from Langlands-Shahidi method, our stability result cannot be covered by the known cases. \\

For simplicity, in this section we write $j(g)=w_{1,r-1}gw_{1,r-1}^{-1}$ as in the proof of Proposition \ref{prop4.3} (for $n=1$).

Before we go to the proof of Theorem \ref{thm5.1}, we consider the following
\begin{lem}\label{lem5.2}
Let $\psi$ be an unramified additive character of $E$. Let $W\in C^\infty(G,\psi_N,\omega)$ with $W(I_{2r+1})=1$. Let $C$ be a constant such that $W$ is right invariant under $K_C^G$. For $m\ge C$, $a\in E$ and $x={}^t\!(x_1,x_2,\dots,x_{r-1})\in \Mat_{(r-1)\times 1}(E)$, we have  
$$W_m\left(\bt_1(a) j\left(\begin{pmatrix}I_{r-1} &x\\ &1 \end{pmatrix}^\wedge \right) \right)=\left\{\begin{array}{lll}W_m(\bt_1(a)), & \textrm{ if } x_i\in \fp_E^{(2i+1)m} ,1\le i\le r-1,\\
0, &\textrm{otherwise.} \end{array}\right.$$
\end{lem}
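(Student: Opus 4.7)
The plan is to combine an explicit decomposition of the unipotent element into commuting root subgroups with the Howe-vector quasi-invariance from $\S2.1$. First, I would observe that $\begin{pmatrix}I_{r-1}&x\\ &1\end{pmatrix}^\wedge$ factors as a product $\prod_{i=1}^{r-1}\bx_{\beta_i}(x_i)$ over the pairwise commuting positive roots $\beta_i=a_i-a_r$ of $\RU_{2r+1}$, and that conjugation by $w_{1,r-1}$ sends each $\beta_i$ to the negative root $-\gamma_i$ with $\gamma_i=a_1-a_{i+1}$. Hence $j\!\left(\begin{pmatrix}I_{r-1}&x\\ &1\end{pmatrix}^\wedge\right)=\prod_{i=1}^{r-1}\bx_{-\gamma_i}(\epsilon_i x_i)$ for appropriate signs $\epsilon_i\in\{\pm1\}$, again a product of commuting negative root vectors (since $-\gamma_i-\gamma_j$ is never a root of $\RU_{2r+1}$). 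A direct computation shows that the only nonzero off-diagonal entries of this matrix are $\epsilon_i x_i$ at position $(i+1,1)$ and the symmetric entry $-\epsilon_i\bar x_i$ at $(2r+1,2r+1-i)$, for $i=1,\dots,r-1$.

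Next, comparing these entries with the explicit matrix shape of $H_m$ recalled in $\S2.1$, the constraint at $(i+1,1)$ and at its symmetric counterpart both reduce to $x_i\in\fp_E^{(2i+1)m}$. So $j(\cdot)\in H_m$ precisely when every $x_i$ satisfies this bound, and because $j(\cdot)$ carries no entry at any simple-root position $(k,k+1)$, one checks $\psi_m(j(\cdot))=1$. Applying the Howe-vector quasi-invariance $W_m(gh)=\psi_m(h)W_m(g)$ from Eq.~(2.1) with $g=\bt_1(a)$ and $h=j(\cdot)$ then yields $W_m(\bt_1(a)j(\cdot))=W_m(\bt_1(a))$, settling the first branch of the formula.

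Finally, for the converse I would show that if some $x_{i_0}\notin\fp_E^{(2i_0+1)m}$ then $W_m(\bt_1(a)j(\cdot))=0$, imitating the proof of Lemma~2.2. The strategy is to multiply on the right by a simple-root vector $\bx_{\alpha_{i_0}}(z)\in H_m$ (with $z\in\fp_E^{-m}$, so that $\psi_m(\bx_{\alpha_{i_0}}(z))=\psi(z)$) and to compute $W_m(\bt_1(a)j(\cdot)\bx_{\alpha_{i_0}}(z))$ in two ways: once by Howe-vector quasi-invariance on the right, and once by commuting $\bx_{\alpha_{i_0}}(z)$ past $j(\cdot)$ using the Chevalley relation $[\bx_{-\gamma_{i_0}},\bx_{\alpha_{i_0}}]\propto\bx_{-\gamma_{i_0-1}}$, which transports the obstruction from index $i_0$ down to $i_0-1$. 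This produces an induction on $i_0$ whose base case $i_0=1$ is handled by the $\SL_2$-Bruhat decomposition of $\bx_{-\alpha_1}(x_1)\bx_{\alpha_1}(z)$, exactly as in Lemma~2.2; in each step the two computations yield a character identity that cannot hold for all admissible $z$ unless $W_m(\bt_1(a)j(\cdot))=0$. The main technical obstacle will be to arrange the induction so that the extraneous root vectors produced by the Chevalley commutators at each step lie in $H_m$ with trivial $\psi_m$-value, while the essential character $\psi(z)$ survives on the left via $\psi_N$ to force the vanishing.
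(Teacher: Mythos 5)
Your setup and matrix identification are correct: writing $j\!\left(\begin{pmatrix}I_{r-1}&x\\ &1\end{pmatrix}^\wedge\right) = \begin{pmatrix}1&0\\x&I_{r-1}\end{pmatrix}^\wedge$ places the entry $x_i$ at position $(i+1,1)$ and $-\bar x_i$ at $(2r+1,2r+1-i)$, and the matrix shape of $H_m$ from $\S2.1$ gives precisely the constraint $x_i\in\fp_E^{(2i+1)m}$ at both positions, with $\psi_m$ trivial on the lower-triangular element. So the "nonzero" branch of the formula is handled correctly and essentially as the paper's cited reference would do it.

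The "vanishing" branch, however, has a genuine gap, and you have in effect flagged it yourself in the last sentence. The identity you propose to exploit does not produce a vanishing constraint. If you carry out the commutation $\bar u\,\bx_{\alpha_{i_0}}(z) = \bx_{\alpha_{i_0}}(z)\,\bx_{-\gamma_{i_0-1}}(-zx_{i_0})\,\bar u$ (for $i_0>1$), the factor $\bx_{-\gamma_{i_0-1}}(-zx_{i_0})$ is a \emph{negative} root element; it is not in $N$, so no $\psi_N$-twist is extracted from it on the left. The only clean continuation is to push it back into the lower unipotent on the right, giving
$$W_m\bigl(\bt_1(a)\bar u\,\bx_{\alpha_{i_0}}(z)\bigr)=\psi(z)\,W_m\bigl(\bt_1(a)\bar u''\bigr),$$
where $\bar u''$ is $\bar u$ with $x_{i_0-1}$ replaced by $x_{i_0-1}-zx_{i_0}$. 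Comparing with the right quasi-invariance computation $\psi(z)W_m(\bt_1(a)\bar u)$ gives $W_m(\bt_1(a)\bar u)=W_m(\bt_1(a)\bar u'')$ --- an \emph{invariance} statement (invariance of $W_m$ under translating $x_{i_0-1}$ by $x_{i_0}\fp_E^{-m}$), not the vanishing $W_m(\bt_1(a)\bar u)=0$. The "obstruction" is not being transported to index $i_0-1$ in a form that forces vanishing; it is simply being absorbed as a shift. The actual root-killing here must pair a character coming from $\psi_N$ on the left against one from $\psi_m$ on the right with \emph{different} arguments, and the naive choice of $\bx_{\alpha_{i_0}}$ does not do that (one gets $\psi(z)$ on both sides). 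In the correct argument the roots $\gamma_i=\epsilon_1-\epsilon_{i+1}$ (not just $\alpha_{i_0}$) enter, the torus factor $\bt_1(a)$ scales the $\gamma_i$-direction, and the core identity is the $\SL_2$-level Bruhat decomposition together with the precise $H_m$-membership bounds --- and this must be handled at \emph{every} index, not only at $i_0=1$. The paper itself omits the proof and points to \cite[Lemma 2.6]{Zh1}, where exactly this more careful root-killing is carried out in an analogous setting; your outline as written does not supply the missing mechanism.
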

\begin{proof}
This follows from a standard argument regarding ``root killing". We omit the details, see \cite[Lemma 2.6]{Zh1} for a proof in a similar situation.
\end{proof}

\begin{proof}[Proof of Theorem $\ref{thm5.1}$]
Take $f\in \CM(\pi),f_0\in \CM(\pi_0)$ such that $W^f(I_{2r+1})=W^{f_0}(I_{2r+1})=1$. Since $\pi,\pi_0$ have the same central character $\omega$, by Corollary \ref{cor2.7}, there exist functions
$f_{\tilde w_i}\in C_c^\infty(\Omega_{w_i},\omega)$ such that 
\begin{align}\label{eq5.1}B_m(g,f)-B_m(g,f_0)=\sum_{i=1}^r B_m(g,f_{\tilde w_i}),\end{align}
for all $g\in G, $ and sufficiently large $m$ depending only on $f,f_0$.  By Lemma \ref{lem4.2}(2), we have 
$$B_m(g,f)-B_m(g,f_0)=B_m(g,f_{\tilde w_1}),$$
for $g\in P_1\tilde w_1 P_1$. By Lemma \ref{lem4.2}(3), we have 
\begin{align}\label{eq5.2}
B_m(\bt_1(a)\tilde w_1 u_0 ,f_{\tilde w_1})=0,
\end{align}
for all $a\in E^\times, u_0\in S_1^--(N_m\cap S_1^-)$, and for sufficiently large $m$ (depending only on $f,f_0$).

Let $\chi$ be a quasi-character of $E^\times$ and $k$ be a positive integer such that Eq.(\ref{eq5.1})-(\ref{eq5.2}) hold for $B_k$. Note that $k$ only depends on $f,f_0$.  Now take an integer $m$ such that $m>\max\wpair{\cond(\chi),k}$. Let $i>m$ be a large enough integer such that we can define $\xi_s^i\in V(s,\chi,\psi^{-1})$ as in $\S$3.1. (Note that the space of $\chi$ is one-dimensional and thus it's unnecessary to add $v$ in the notation $\xi_s^{i,v}$ as in $\S$3.1).

As in the proof of Proposition \ref{prop4.3}, we can compute that 
\begin{align*}&\Psi(W_m^f,\xi_s^i)\\
=&\vol(\ov{V}_{2,i})\int_{E^\times}\int_{\Mat_{(r-1)\times 1}}W_m^f\left( j\left(\begin{pmatrix}I_{r-1}&x\\ &1 \end{pmatrix}^{\wedge}m(a) \right) \right)\chi(a)|a|_E^{s-1} dx da\\
=&\vol(\ov{V}_{2,i})\int_{E^\times}\int_{\Mat_{(r-1)\times 1}}W_m^f\left(\bt_1(a) j\left(\begin{pmatrix}I_{r-1}&xa\\ &1 \end{pmatrix}^{\wedge} \right) \right)\chi(a)|a|_E^{s-1} dx da\\
=&\vol(\ov{V}_{2,i})\int_{E^\times}\int_{\Mat_{(r-1)\times 1}}W_m^f\left(\bt_1(a) j\left(\begin{pmatrix}I_{r-1}&x\\ &1 \end{pmatrix}^{\wedge} \right) \right)\chi(a)|a|_E^{s-r} dx da.
\end{align*}
By Lemma \ref{lem5.2}, we have 
$$\Psi(W_m^f,\xi_s^i)=q_E^{-(r^2-r)m}\int_{E^\times}W_m^f(\bt_1(a))\chi(a)|a|_E^{s-r}da.$$

By Corollary \ref{cor2.3} and the assumption $m\ge \cond(\chi)$, we have 
\begin{equation}\label{eq5.3}\Psi(W_m^f,\xi_s^i)=q_E^{-(r^2-r)}\int_{1+\fp_E^m}\chi(a)da=\vol(\ov{V}_{2,i})q_E^{-(r^2-r)m}\vol(1+\fp^m_E,da).\end{equation}
The same application applies to $\Psi(W_m^{f_0},\xi_s^i)$. In particular, we have 
$$\Psi(W_m^f,\xi_s^i)=\Psi(W_m^{f_0},\xi_s^i)=\vol(\ov{V}_{2,i})q_E^{-(r^2-r)m}\vol(1+\fp_E^m,da). $$
Let $\tilde \xi_{1-s}^i=M(s,\chi)\xi_{s}^i$. From the local functional equation, we get 
\begin{equation}\label{eq5.4}\Psi(W_m^f,\tilde \xi_{1-s}^i)-\Psi(W_m^{f_0},\tilde \xi_{1-s}^i)=\Psi(W_m^f,\xi_s^i)(\gamma(s,\pi\times \chi,\psi)-\gamma(s,\pi_0\times \chi,\psi)).\end{equation}

From the proof of Proposition \ref{prop4.3}, we can obtain that 
\begin{align}
&\Psi(W_m^f,\xi_s^i)-\Psi(W_m^{f_0},\xi_s^i)\label{eq5.5}\\
=&\vol(\ov{V}_{2,i})\vol(D_m)\int_{E^\times}B_m(\bt_1(a)\tilde w_1,f_{\tilde w_1})\chi(\bar a^{-1})|a|^{-s+1-r}da.\nonumber
\end{align}
Combining Eq.(\ref{eq5.3})-Eq.(\ref{eq5.5}), we get 
\begin{align}\label{eq5.6}
&\gamma(s,\pi\times \chi,\psi)-\gamma(s,\pi_0\times \chi,\psi)\\
=&q^{(r^2-r+1)m}\vol(D_m)\int_{E^\times}(W_m^f(\bt_1(a)\tilde w_1)-W_m^{f_0}(\bt_1(a)\tilde w_1))\chi(\bar a)^{-1}|a|^{-s+1-r}da.\nonumber
\end{align}
In the above expression $m$ depends on $\cond(\chi)$. To prove the stability, we will show that in the Eq.(\ref{eq5.6}), the integer $m$ can be replaced by $k$, which only depends on $f,f_0$, and thus only depends on $\pi,\pi_0$. 

Since $m\ge k$, by Lemma \ref{lem2.1}(3), we have 
\begin{align*}
&W_m^f(\bt_1(a)\tilde w_1)-W_m^{f_0}(\bt_1(a)\tilde w_1)\\
=&\frac{1}{\vol(N_m)}\int_{N_m}\psi_N^{-1}(u)(W_k^f(\bt_1(a)\tilde w_1u)-W_k^{f_0}(\bt_1(a)\tilde w_1u))du.
\end{align*}
We can split $N_m$ as $S^+_{1,m}S^-_{1,m}$, where $S^+_{1,m}=S^+_1\cap N_m, S^-_{1,m}=S^-_1\cap N_m$. For $u\in N_m$, we can write $u=u^+u^-$ with $u^+\in S^+_{1,m},u^-\in S^-_{1,m}$. Note that we have $\bt_1(a)\tilde w_1u^+=u^+ \bt_1(a)\tilde w_1$. Thus 
$$W_k^f(\bt_1(a)\tilde w_1u^+u^-)=W_k^f(u^+\bt_1(a)\tilde w_1 u^-)=\psi_N(u^+) W_k^f(\bt_1(a)\tilde w_1 u^-). $$
There is a similar expression for $W_k^{f_0}$. We thus get 
\begin{align*}
&W_m^f(\bt_1(a)\tilde w_1)-W_m^{f_0}(\bt_1(a)\tilde w_1)\\
=&\frac{1}{\vol(S^-_{1,m})}\int_{S^-_{1,m}}\psi_N^{-1}(u)(W_k^f(\bt_1(a)\tilde w_1u^-)-W_k^{f_0}(\bt_1(a)\tilde w_1u^-))du^-.
\end{align*}
By Eq.(\ref{eq5.2}), if $u^-\in S^-_{1,m}-S^-_{1,k}$, we have 
$$W_k^f(\bt_1(a)\tilde w_1u^-)-W_k^{f_0}(\bt_1(a)\tilde w_1u^-)=0. $$
Thus 
\begin{align*}
&W_m^f(\bt_1(a)\tilde w_1)-W_m^{f_0}(\bt_1(a)\tilde w_1)\\
=&\frac{1}{\vol(S^-_{1,m})}\int_{S^-_{1,k}}\psi_N^{-1}(u)(W_k^f(\bt_1(a)\tilde w_1u^-)-W_k^{f_0}(\bt_1(a)\tilde w_1u^-))du^-\\
=&\frac{\vol(S^-_{1,k})}{\vol(S^-_{1,m})}( W_k^f(\bt_1(a)\tilde w_1u^-)-W_k^{f_0}(\bt_1(a)\tilde w_1u^-)),
\end{align*}
where in the last step, we used Eq.(\ref{eq2.1}). Thus Eq.(\ref{eq5.6}) can be written as 

\begin{align}\label{eq5.7}
&\gamma(s,\pi\times \chi,\psi)-\gamma(s,\pi_0\times \chi,\psi)\\
=&c_k\int_{E^\times}(W_k^f(\bt_1(a)\tilde w_1)-W_k^{f_0}(\bt_1(a)\tilde w_1))\chi(\bar a)^{-1}|a|^{-s+1-r}da,\nonumber
\end{align}
where $c_k=q^{(r^2-r+1)m}\vol(D_m)\frac{\vol(S^-_{1,k})}{\vol(S^-_{1,m})} $ is a constant only depends on $k$. Now let $l=l(\pi,\pi_0)$ is a positive integer such that 
$$W_k^f(\bt_1(aa_0)\tilde w_1)-W_k^{f_0}(\bt_1(aa_0)\tilde w_1)=W_k^f(\bt_1(a)\tilde w_1)-W_k^{f_0}(\bt_1(a)\tilde w_1) $$
for all $a_0\in 1+\fp^l$. Then by Eq.(\ref{eq5.7}), we get $$\gamma(s,\pi\times \chi,\psi)-\gamma(s,\pi_0\times \chi,\psi)=0 $$
if $\cond(\chi)>l$.
\end{proof}
\noindent\textbf{Remark:} We expect the method developed in \cite{CST} can give a proof of the stability for the local gamma factors of $\RU_{2r+1}\times \Res_{E/F}(\GL_n)$ for general $n$, i.e., for given irreducible generic representations $\pi_1,\pi_2$ of $\RU_{2r+1}$ with the same central character, and irreducible generic representation $\tau$ of $\GL_n(E)$, there exists an integer $l=l(\pi_1,\pi_2,\tau)$ such that for any quasi-character $\chi$ of $E^\times$ with $\cond(\chi)>l$, then 
$$\gamma(s,\pi_1\times (\chi\otimes \tau),\psi)=\gamma(s,\pi_1\times (\chi\otimes \tau),\psi).$$
In general, let $G$ be a classical group over a $p$-adic field $F$. The local gamma factors for generic representations of $G\times \GL_n(F)$ can be defined, either using Langlands-Shahidi method or using integral representation method. One expects stability for such local gamma factors. But to the author's knowledge, this kind stability (when $n>1$) was solved only for $G=\GL_m$ in \cite{JS}, and is still open for any other classical group $G$.


\begin{thebibliography}{XXXX}
\addtocontents{Bibliography}

\bibitem[AGRS]{AGRS}
A. Aizenbud, D. Gourevitch, S. Rallis, G. Schiffmann, {\em Multiplicity one theorems,} Ann. of Math. \textbf{172}(2010), 1407-1434.

\bibitem[Ba95]{Ba95}
E. M. Baruch, {\em Local factors attached to representations of p-adic groups and strong multiplicity one}, Ph.D. thesis, Yale University, Ann Arbor, MI, 1995, http://search.proquest.com/docview/304230667, MR 2692992

\bibitem[Ba97]{Ba97}
E. M. Baruch, {\em On the Gamma factors attached to representations of $U(2,1)$ over a $p$-adic field}, Israel Journal of Math. \textbf{102} (1997), 317-345.

\bibitem[BAS]{BAS}
A. Ben-Artzi, D. Soudry, {\em $L$-functions for $\RU_m\times R_{E/F}GL_n$ $(n\le \left[ \frac{m}{2}\right])$}, in ``Automorphic Forms and $L$-functions I. Global Aspects",  A volume in honor of S. Gelbart, D. Ginzburg, E. Lapid and D. Soudry, eds. Israel, Math. Conf. Proc., Contemporary Mathematics, \textbf{488} (2009), pp. 13-59. 


\bibitem[Ca]{Ca}
W. Casselman, {\em Introduction to the theory of admissible representations of $p$-adic groups}, online notes, available at 
https://www.math.ubc.ca/~cass/research/pdf/p-adic-book.pdf

\bibitem[Ch]{Ch}
J. Chai, {\em Bessel functions and local converse conjecture of Jacquet}, to appear in J.E.M.S.

\bibitem[Chen]{Chen}
J.J. Chen, {\em Then $n\times (n-2)$ local converse theorem for $\GL(n)$ over a $p$-adic field}, J. Number Theory \textbf{120} (2006), 193-205.

\bibitem[CPSS05]{CPSS}
J.W. Cogdell, I.I. Piatetski-Shapiro, F. Shahidi, {\em Partial Bessel functions for quasi-split groups,} Automorphic Representations, L-functions and Applications: Progress and Prospects, Walter de Gruyter, Berlin, 2005, 95-128.

\bibitem[CPSS08]{CPSS08}
J. W. Cogdell, I. I. Piatetski-Shapiro, F. Shahidi, {\em Stability of gamma factors for quasi-split groups,} J. Inst. Math. Jussieu \textbf{7} (2008), 27-66.

\bibitem[CST]{CST}
J.W. Cogdell, F. Shahidi, T-L. Tsai, {\em Local Langlands correspondence for $\GL_n$ and the exterior and symmetric square $\epsilon$-factors,} Duke Math. J. \textbf{166}(2017), 2053-2132

\bibitem[GGP]{GGP}
W.T. Gan, A. B.H. Gross, D. Prasad, {\em Symplectic local root numbers, central critical L-values and restriction problems in the representation theory of classical groups}, Ast\'erisque \textbf{346} (2012), 1-109.

\bibitem[GPS]{GPS}
S. Gelbart, I.I. Piatetski-Shapiro, {\em Automorphic forms and L-functions for the unitary group}. In Lie group representations, II (College Park, Md., 1982/1983), volume 1041 of Lecture Notes in Math., pages 141-184. Springer, Berlin, 1984. 

\bibitem[J]{J}
H. Jacquet, {\em Germs for Kloosterman integrals, a review.} Contemp. Math., Vol \textbf{664} (2016), 182-195.

\bibitem[JLiu]{JLiu}
H. Jacquet, B. Liu, {\em On the local converse theorem for $p$-adic $\GL_n$}, to appear in American Journal of Math.


\bibitem[JS]{JS}
H. Jacquet, J. Shalika, {\em A lemma on highly ramified $\epsilon$-factors,} Math. Ann. \textbf{271} (1985), 319-332.

\bibitem[Jng]{Jng}
D. Jiang, {\em On local $\gamma$-factors}. In: Arithmetic geometry and number theory, Series on Number Theory and Applications, 1, World Sci. Publ., Hackensack, NJ, 2006, 1-28. 

\bibitem[JngS03]{JngS03}
D. Jiang, D. Soudry, {\em The local converse theorem for $\SO_{2n+1}$ and applications}, Annals of Math. \textbf{157}(2003), 743-806.

\bibitem[JngS12]{JngS12}
D. Jiang, D. Soudry, {\em On the local decent from $\GL(n)$ to classical groups}, appendix to ``Self-dual representations of division algebras and Weil groups: a contrast" by D. Prasad and D. Ramakishnan, American Journal of Math, \textbf{134} (2012), 767-772.

\bibitem[Ka]{Ka}
E. Kaplan, {\em Complementary results on the Rankin-Selberg gamma factors of classical groups}, Journal of Number Theory, \textbf{146}(2015) 390-447.

\bibitem[M]{M}
K. Morimoto, {\em On the irreducibility of global descents for even unitary groups and its applications}, to appear in Transactions of A.M.S.

\bibitem[ST]{ST}
D. Soudry, Y. Tanay, {\em On local decent for unitary groups}, Journal of Number Theory, \textbf{ 146} (2015), 557-626.

\bibitem[Ta]{Ta}
B. Tamir, {\em On L-functions and intertwining operators for unitary groups}, Israel Journal of Math. \textbf{73}(1991) no.2, 161-188.

\bibitem[Zh1]{Zh1}
Q. Zhang, {\em Stability of Rankin-Selberg local gamma factors for $\Sp_{2n}, \wt{\Sp}_{2n}$ and $\RU(n,n)$}, Int. Journal of Number Theory, \textbf{13}(2017), 2393-2432.

\bibitem[Zh2]{Zh}
Q. Zhang, {\em A local converse theorem for $\Sp_{2r}$ and $\RU(r,r)$,} accepted for publication in Math. Ann., 
arXiv:1705.01692 



\end{thebibliography}
\end{document}